\newtheorem{thm}{Theorem}[section]
\newtheorem{cor}[thm]{Corollary}
\newtheorem{lem}[thm]{Lemma}
\newtheorem{fact}[thm]{Fact}
\newtheorem{pro}[thm]{Problem}
\newtheorem{prop}[thm]{Proposition}
\theoremstyle{definition}
\theoremstyle{remark}
\newtheorem{rmk}[thm]{\bf Remark}
\newtheorem{conj}[thm]{\bf Conjecture}
\numberwithin{equation}{section}
\numberwithin{figure}{section}
\date{\today}
\begin{document}
\title[Spectral extremal problems for the  $(p,Q)$-spectral radius of hypergraphs]{Spectral extremal problems for  the $(p,Q)$-spectral radius of hypergraphs}

\author[J. Zheng]{Jian Zheng}
\address{School of  Mathematics and Statistics, Jiangxi Normal University, Nanchang 330022,  China}
\email{zhengj@jxnu.edu.cn}

\author[H. Li]{Honghai Li$^\dag$}
\address{School of  Mathematics and Statistics, Jiangxi Normal University, Nanchang 330022,  China}
\email{lhh@jxnu.edu.cn}
\thanks{$^\dag$The corresponding author. H. Li was supported by National Natural Science Foundation of China (No. 12161047) and   Jiangxi Provincial Natural Science Foundation (No. 20224BCD41001).}

\author[L. Su]{Li Su$^\ddag$}
\address{School of  Mathematics and Statistics, Jiangxi Normal University, Nanchang 330022,  China}
\email{suli@jxnu.edu.cn}
\thanks{$^\ddag$L. Su was supported by National Natural Science Foundation of China (No. 12061038).}


\keywords{Spectral generalized  Tur\'an problems; $(p,Q)$-spectral radius;  hereditary property; edge-critical graph; the Erd\H{o}s Pentagon Problem}

\begin{abstract}
Let $Q$ be an $s$-vertex $r$-uniform hypergraph, and let $H$ be an $n$-vertex $r$-uniform hypergraph. Denote by $\mathcal{N}(Q,H)$  the number of isomorphic copies of $Q$ in $H$. For a hereditary family $\mathcal{P}$  of $r$-uniform hypergraphs, define
$$\pi(Q,\mathcal{P}):=\lim\limits_{n\to \infty}\binom{n}{s}^{-1}\max\{\mathcal{N}(Q,H): H\in \mathcal{P}~~\mbox{and}~~|V(H)|=n\}.$$
For $p\geq1$, the $(p,Q)$-spectral radius of $H$  is defined as $$\lambda^{(p)}(Q,H)=\max_{\|\mathbf{x}\|_{p}=1}s!\sum_{\{i_{1},\ldots,i_{s}\}\in \binom{[n]}{s}}\mathcal{N}(Q,H[\{i_{1},\ldots,i_{s}\}])x_{i_{1}}\cdots x_{i_{s}}.$$

 In this paper, we present a systematically  investigation of the parameter $\lambda^{(p)}(Q,H)$. First, we prove that the limit
$$\lambda^{(p)}(Q,\mathcal{P}):=\lim\limits_{n\to \infty}n^{s/p-s}\max\{\lambda^{(p)}(Q,H): H\in \mathcal{P}~~\mbox{and}~~|V(H)|=n\}$$
exists, and for $p>1$, it satisfies
$$\pi(Q,\mathcal{P})=\lambda^{(p)}(Q,\mathcal{P}).$$
 Second, we study spectral generalized Tur\'an problems.
 Specifically, we establish a spectral stability result and apply it to derive a spectral version of the Erd\H{o}s Pentagon Problem: for $p\geq1$ and sufficiently large $n$, the balanced blow-up of $C_{5}$ maximizes $\lambda^{(p)}(C_{5},H)$ among all $n$-vertex triangle-free graphs $H$, thereby improving a result of Liu \cite{Liu2025}. Furthermore, we show  that for $p\geq1$ and sufficiently large $n$, the $l$-partite Tur\'an graph $T_{l}(n)$ attains the maximum $\lambda^{(p)}(K_{s},H)$ among all $n$-vertex F-free graphs $H$, where $F$ is an edge-critical graph with $\chi(F)=l+1$. This  provides a spectral analogue of a theorem due to Ma and Qiu \cite{MQ2020}.

\end{abstract}

\maketitle
\section{Introduction}
A  \emph{hypergraph} $H=(V(H),E(H))$ consists of a vertex set $V(H)=\{v_1,v_2,{\cdots},v_n\}$  and an edge set $E(H)=\{e_1,e_2,{\cdots},e_m\}$ , where $e_i \subseteq V$ for $i \in [m]:=\{1,2,\ldots,m\}$. The \emph{order} and  \emph{size} of $H$ are defined as  $\nu(H):=|V(H)|$ and  $e(H):=|E(H)|$, respectively. If $|e_i|=r$ for each $i \in [m]$ and $r \geq2$, then $H$ is called an \emph{$r$-uniform} hypergraph (or \emph{$r$}-graph).
 A simple graph is exactly a  $2$-uniform hypergraph.  The subgraph $H[I]$  is  \emph{induced} by $I$, that is, $V(H[I])=I$ and $E(H[I])=\{e\in E(H):e\subseteq I\}$. For any vertex $v\in V(H)$, we write $H-v$ for the subgraph of $H$  induced by $V(H)\backslash\{v\}$.
 For $l\geq r\geq2$, an  $r$-graph is called \emph{$l$-partite} if its vertex set can be divided into $l$ parts such that each edge has at most one vertex from each part. An edge maximal $l$-partite $r$-graph is called \emph{complete $l$-partite}. Let $T^{r}_{l}(n)$ be
the complete $l$-partite $r$-graph on $n$ vertices without two part sizes differing by more than one; when $r=2$, the graph $T^{2}_{l}(n)$ is Tur\'an graph $T_{l}(n)$.

Given an $s$-vertex $r$-graph $Q$ and an $r$-graph $H$, let $\mathcal{N}(Q,H)$ denote the number of isomorphic copies of $Q$ in $H$.
For example,  for the complete $r$-graph $K_{s}^{r}$ on $s$ vertices, we have $\mathcal{N}(Q,K_{s}^{r})=\frac{s!}{|Aut(Q)|}$.
For a family $\mathcal{F}$ of  $r$-graphs, we say a hypergraph $G$ is \emph{$\mathcal{F}$-free} if $G$ does not
contain any member of  $\mathcal{F}$ as a subgraph. The generalized Tur\'an number $ex(n,Q,\mathcal{F})$ is  the largest $\mathcal{N}(Q,H)$ among all the $n$-vertex $\mathcal{F}$-free $r$-graphs $H$. The function  $ex(n,Q,\mathcal{F})$ is a well-studied parameter; a comprehensive survey can be found in \cite{GP2025}.
Let $E(Q,H)$  denote the collection of  all $s$-subsets $I$ of $V(H)$ such that  $\mathcal{N}(Q,H[I])>0$, and define $E_{Q,H}(v)=\{I\in E(Q,H):v\in I\}$. The \emph{$Q$-degree} of $v$, denoted $d_{Q,H}(v)$, is given by $$d_{Q,H}(v)=\sum_{I\in E_{Q,H}(v)} \mathcal{N}(Q,H[I]).$$  The minimum $Q$-degree of $H$ is denoted by
$\delta_{Q}(H)$.

Let  $p\geq1$,  $Q$  be an $s$-vertex $r$-graph and $H$ be  an $n$-vertex $r$-graph, where $r\leq s\leq n$.
The \emph{Q-Lagrangian polynomial} $P_{Q,H}(\mathbf{x})$ of $H$ is defined as
\begin{displaymath}
\begin{split}
 P_{Q,H}(\mathbf{x})
 &=s!\sum_{\{i_{1},\ldots,i_{s}\}\in \binom{[n]}{s}}\mathcal{N}(Q,H[\{i_{1},\ldots,i_{s}\}])x_{i_{1}}\cdots x_{i_{s}}\\
 &=s!\sum_{\{i_{1},\ldots,i_{s}\}\in E(Q,H)}\mathcal{N}(Q,H[\{i_{1},\ldots,i_{s}\}])x_{i_{1}}\cdots x_{i_{s}},
\end{split}
\end{displaymath}
and the \emph{$(p,Q)$-spectral radius} $\lambda^{(p)}(Q,H)$ of $H$  is defined as
$$\lambda^{(p)}(Q,H)=\max_{\|\mathbf{x}\|_{p}=1}P_{Q,H}(\mathbf{x}),$$
where $\mathbf{x}=(x_{1},\ldots,x_{n})\in \mathbb{R}^{n}$  and $\|\mathbf{x}\|_{p}:=(|x_{1}|^{p}+\cdots +|x_{n}|^{p})^{1/p}$. 
It is noteworthy that the definition of the $(p,Q)$-spectral radius was recently introduced by Liu \cite{Liu2025}, and our definition here differs from Liu's by a constant factor $\frac{|Aut(Q)|}{s!}$.
When $Q=K_{s}^{r}$, we abbreviate $\lambda^{(p)}(Q,H)$ as $\lambda^{(p)}_{s}(H)$, termed the \emph{$s$-clique $p$-spectral radius}  of $H$.  If, further $Q=K_{r}^{r}$, we simple write $\lambda^{(p)}(H)$, recovering the $p$-spectral radius of $H$ introduced by Keevash, Lenz, and Mubayi\cite{KLM2014}.
If $\mathbf{x}\in \mathbb{R}^{n}$ is a vector such that $\|\mathbf{x}\|_{p}=1$ and
$\lambda^{(p)}(Q,H)=P_{Q,H}(\mathbf{x})$, then $\mathbf{x}$ is called a \emph{$Q$-eigenvector} of $H$ corresponding to $\lambda^{(p)}(Q,H)$. Clearly, there always exists a nonnegative $Q$-eigenvector corresponding to $\lambda^{(p)}(Q,H)$, called  a \emph{principal $Q$-eigenvector} of $H$.
Moreover, if a principal $Q$-eigenvector $\mathbf{x}$ is strictly  positive (i.e., $x_{v}>0$ for all $v\in V(H)$), then we call it a \emph{Perron-Frobenius $Q$-eigenvector} of $H$.

A property of $r$-graphs is a family of $r$-graphs closed under isomorphisms.  For a property $\mathcal{P}$, denoted by $\mathcal{P}_{n}$  the collection of $r$-graphs in $\mathcal{P}$ of order $n$. A property is called  \emph{hereditary} if it is closed under taking induced subgraphs. Given a  family  $\mathcal{F}$ of $r$-graphs, the class of all $\mathcal{F}$-free $r$-graphs forms a hereditary property, denoted by $\overline{\mathcal{F}}$. Throughout our discussion, we always assume that the hereditary property of 
$r$-graphs is closed under taking the disjoint union with an isolated vertex.
Given two $r$-graphs $Q$ and $H$, a map $\phi$: $V(Q)\rightarrow V(H)$ is a \emph{homomorphism} from $Q$ to $H$ if $\phi(e)\in E(H)$ for all $e\in E(Q)$. We say $Q$ is \emph{$H$-colorable} if there is a homomorphism from $Q$ to $H$.

A fundamental problem  in  extremal combinatorics can be formulated as follows: Given an $s$-vertex $r$-graph $Q$ and a hereditary property $\mathcal{P}$ of $r$-graphs, determine the extremal function
$$ex(Q,\mathcal{P}_{n}):=\max_{H\in \mathcal{P}_{n}}\mathcal{N}(Q,H).$$
By  Katona-Nemetz-Simonovits averaging argument \cite{KNS1964}, the ratio $ex(Q,\mathcal{P}_{n})/\binom{n}{s}$ is decreasing in $n$,
and so the limit

\begin{equation*}\label{eltt}
\pi(Q,\mathcal{P}):=\lim\limits_{n\to \infty}\frac{ex(Q,\mathcal{P}_{n})}{\binom{n}{s}}
\end{equation*}
always exists, called the \emph{$Q$-density} of $\mathcal{P}$. If $\mathcal{P}=\overline{\mathcal{F}}$  for
a family  $\mathcal{F}$ of $r$-graphs, then $ex(K_{r}^{r},\mathcal{P}_{n})$ and $\pi(K_{r}^{r},\mathcal{P})$ are  the \emph{Tur\'an number} and \emph{Tur\'an density} of $\mathcal{F}$, respectively. To maintain consistency in notation, we will use $ex(Q,\overline{\mathcal{F}}_{n})$ instead of $ex(n,Q,\mathcal{F})$ in the remaining part.

Similarly, we can study the spectral analogue of the aforementioned problem. For an $s$-vertex $r$-graph $Q$ and a hereditary property $\mathcal{P}$ of $r$-graphs, we define
$$\lambda^{(p)}(Q,\mathcal{P}_{n}):=\max_{H\in \mathcal{P}_{n}}\lambda^{(p)}(Q,H),$$
and the \emph{$(p,Q)$-spectral density} of $\mathcal{P}$ is defined as
$$\lambda^{(p)}(Q,\mathcal{P}):=\lim\limits_{n\to \infty}\frac{\lambda^{(p)}(Q,\mathcal{P}_{n})}{n^{s-s/p}}.$$

In \cite{N2014A}, Nikiforov conducted a systematic study of the  $p$-spectral radius  of hypergraphs using analytical methods, and proved that $\pi(K_{r}^{r},\mathcal{P})=\lambda^{(p)}(K_{r}^{r},\mathcal{P})$ holds for any $p>1$ and any  hereditary property $\mathcal{P}$ of $r$-graphs. Liu and Bu \cite{LB2023} introduced the $s$-clique spectral radius of a graph $G$(equivalent to $\lambda^{(s)}_{s}(G)$), and extended the spectral Mantel's theorem via the clique tensor. Yu and  Peng \cite{YP2025} gave a spectral version of the generalized Erd\H{o}s-Gallai theorem via 
the clique tensor. In \cite{Liu2025}, Liu  established a  general theorem that extends the result of Keevash-Lenz-Mubayi  and  obtained a spectral Erd\H{o}s pentagon theorem.

In this paper, we investigate spectral extremal problems concerning the $(p,Q)$-spectral radius of hypergraphs. For any hereditary property $\mathcal{P}$ of $r$-graphs, we prove that the \emph{$(p,Q)$-spectral density} of $\mathcal{P}$ exists for all $p\geq1$. Moreover, we show that the $Q$-density of $\mathcal{P}$ coincides with its  $(p,Q)$-spectral density when $p>1$. Furthermore, we study spectral generalized Tur\'an problems. In particular, we establish a spectral stability result: if  the maximum $(p,Q)$-spectral radius among all $\mathcal{F}$-free $r$-graphs satisfies a specific growth condition, then the extremal hypergraphs must possess a large minimum $Q$-degree. As an application, we derive a spectral analogue of the Erd\H{o}s Pentagon Problem. Specifically, for any $p\geq1$ and all sufficiently large $n$, the balanced 
blow-up of $C_{5}$ attains the maximal $(p,C_{5})$-spectral radius  over all $n$-vertex triangle-free graphs. This  extends the result of Liu \cite{Liu2025}. Additionally, we demonstrate  that for $p\geq1$ and $n$ sufficiently large, the $l$-partite Tur\'an graph $T_{l}(n)$ achieves the maximum $s$-clique $p$-spectral radius among all $n$-vertex $F$-free graphs, where $F$ is an edge-critical graph with $\chi(F)=l+1$. This  establishes a spectral counterpart to  the result of Ma and Qiu \cite{MQ2020}  and extends a theorem of Yu and Peng \cite{YP2025}.

\section{Preliminaries}
In this section, we present some fundamental properties of the parameter
$\lambda^{(p)}(Q,H)$. Hereafter, when given an $s$-vertex $r$-graph $Q$ and  an $n$-vertex $r$-graph $H$, it is always assumed that $n\geq s\geq r\geq 2$, provided no ambiguity arises.
\begin{prop}\label{conti}
 Let $Q$  be an $s$-vertex $r$-graph and $H$ be an $n$-vertex $r$-graph. If $p\geq1$, then $\lambda^{(p)}(Q,H)$ is an increasing and continuous function in $p$.
Moreover,
$$\lim\limits_{p\to \infty}\lambda^{(p)}(Q,H)=s!\mathcal{N}(Q,H).$$
\end{prop}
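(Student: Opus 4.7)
The plan is to handle the three claims via a single rescaling device. Since the coefficients of $P_{Q,H}$ are nonnegative and $\|\cdot\|_p$ depends only on the absolute values of the entries, we have $|P_{Q,H}(\mathbf{x})|\le P_{Q,H}(|\mathbf{x}|)$, so any principal $Q$-eigenvector may be taken nonnegative; moreover, $P_{Q,H}$ is $s$-homogeneous, so replacing a nonzero $\mathbf{x}\ge 0$ by $\mathbf{x}/\|\mathbf{x}\|_q$ multiplies $P_{Q,H}(\mathbf{x})$ by $\|\mathbf{x}\|_q^{-s}$. For monotonicity, I would fix $1\le p\le q$ and choose a nonnegative principal $Q$-eigenvector $\mathbf{x}$ at $p$ with $\|\mathbf{x}\|_p=1$. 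The standard inequality $\|\mathbf{x}\|_q\le\|\mathbf{x}\|_p=1$ then yields
\[
\lambda^{(q)}(Q,H)\ge P_{Q,H}\bigl(\mathbf{x}/\|\mathbf{x}\|_q\bigr)=\frac{\lambda^{(p)}(Q,H)}{\|\mathbf{x}\|_q^{s}}\ge\lambda^{(p)}(Q,H),
\]
proving the first assertion.

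For the limit as $p\to\infty$, I would sandwich $\lambda^{(p)}(Q,H)$ between two explicit quantities. Testing against the all-ones vector $\mathbf{1}$, for which $P_{Q,H}(\mathbf{1})=s!\,\mathcal{N}(Q,H)$ and $\|\mathbf{1}\|_p=n^{1/p}$, gives $\lambda^{(p)}(Q,H)\ge s!\,\mathcal{N}(Q,H)/n^{s/p}$. Conversely, any $\mathbf{x}\ge 0$ with $\|\mathbf{x}\|_p=1$ satisfies $x_i\le 1$ for every $i$, so every monomial $x_{i_1}\cdots x_{i_s}$ is at most one, and hence $P_{Q,H}(\mathbf{x})\le s!\sum_{I\in E(Q,H)}\mathcal{N}(Q,H[I])=s!\,\mathcal{N}(Q,H)$. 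Letting $p\to\infty$, the lower bound tends to $s!\,\mathcal{N}(Q,H)$, matching this uniform upper bound.

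For continuity at any $p_0\in[1,\infty)$, I would invoke the two-sided comparison $n^{-|1/p-1/p_0|}\le\|\mathbf{x}\|_{p_0}\le n^{|1/p-1/p_0|}$, valid for every $\mathbf{x}\in\mathbb{R}^n$ with $\|\mathbf{x}\|_p=1$. Rescaling a nonnegative principal $Q$-eigenvector at $p$ to $\ell^{p_0}$-unit norm exactly as in the monotonicity argument gives $\lambda^{(p_0)}(Q,H)\ge\lambda^{(p)}(Q,H)\cdot n^{-s|1/p-1/p_0|}$, and the symmetric estimate holds with $p$ and $p_0$ swapped. Since $n^{-s|1/p-1/p_0|}\to 1$ as $p\to p_0$, a squeeze argument yields $\lambda^{(p)}(Q,H)\to\lambda^{(p_0)}(Q,H)$. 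The whole proof is one rescaling trick repeated three times against different norm comparisons, so there is no serious obstacle beyond bookkeeping of exponents; the only point meriting care is the initial reduction to nonnegative competitors, which is immediate from the sign structure of $P_{Q,H}$.
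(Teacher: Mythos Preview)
Your proposal is correct. All three claims follow cleanly from the $s$-homogeneity of $P_{Q,H}$ together with the standard $\ell^p$-norm comparison inequalities, and your squeeze for the limit is identical to the paper's.

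Your route differs genuinely from the paper's for monotonicity and continuity. The paper reparametrises via $y_i=x_i^p$, rewriting
\[
\lambda^{(p)}(Q,H)=\max_{\|\mathbf{y}\|_1=1}\; s!\sum_{I\in E(Q,H)}\mathcal{N}(Q,H[I])\,y_I^{1/p},
\]
and then applies the Mean Value Theorem to $t\mapsto c^{1/t}$ to obtain the pointwise bound $0\le y_I^{1/b}-y_I^{1/a}\le b-a$ for $1\le a\le b$, yielding both monotonicity and a global Lipschitz estimate $0\le\lambda^{(b)}-\lambda^{(a)}\le (b-a)\,s!\,\mathcal{N}(Q,H)$ in the variable $p$ itself. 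By contrast you keep the $\ell^p$-constraint and exploit only the inequality $\|\mathbf{x}\|_q\le\|\mathbf{x}\|_p\le n^{1/p-1/q}\|\mathbf{x}\|_q$ together with homogeneity; this is more conceptual and avoids any calculus, at the cost of producing a modulus of continuity in $|1/p-1/p_0|$ rather than an explicit Lipschitz constant in $p$. Either approach suffices for the proposition as stated.
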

\begin{proof}
Since $\lambda^{(p)}(Q,H)$  always has a nonnegative $Q$-eigenvector, we obtain the following equivalent definition of  $\lambda^{(p)}(Q,H)$:
\begin{equation}\label{edef}
\lambda^{(p)}(Q,H)=\max_{|x_{1}|+\cdots+|x_{n}|=1}s!\sum_{\{i_{1},\ldots,i_{s}\}\in E(Q,H)}\mathcal{N}(Q,H[\{i_{1},\ldots,i_{s}\}])|x_{i_{1}}|^{1/p}\cdots |x_{i_{s}}|^{1/p},
\end{equation}
where $\mathbf{x}=(x_{1},\ldots,x_{n})\in \mathbb{R}^{n}$.
Note that $0\leq|x_{i_{1}}|\cdots |x_{i_{s}}|\leq1$. We now claim that for any $b\geq a\geq1$,
$$0\leq|x_{i_{1}}|^{1/b}\cdots |x_{i_{s}}|^{1/b}-|x_{i_{1}}|^{1/a}\cdots |x_{i_{s}}|^{1/a}\leq b-a.$$
The left inequality holds trivially,  and the right equality  holds when $|x_{i_{1}}|\cdots |x_{i_{s}}|=0 ~~\mbox{or}~~1$.
For $0<|x_{i_{1}}|\cdots |x_{i_{s}}|<1$, we apply the Mean Value Theorem  to the function $f(x):=(|x_{i_{1}}|\cdots |x_{i_{s}}|)^{1/x}$. There exists  $\xi\in (a,b)$ such that
\begin{displaymath}
\begin{split}
|x_{i_{1}}|^{1/b}\cdots |x_{i_{s}}|^{1/b}-|x_{i_{1}}|^{1/a}\cdots |x_{i_{s}}|^{1/a}
&=(b-a)\xi^{-2}(|x_{i_{1}}|\cdots |x_{i_{s}}|)^{\xi^{-1}}\ln(|x_{i_{1}}|\cdots |x_{i_{s}}|)^{-1}\\
&\leq (b-a)(|x_{i_{1}}|\cdots |x_{i_{s}}|)^{\xi^{-1}-\xi^{-2}}\\
&\leq b-a.
\end{split}
\end{displaymath}
Thus, the claim holds.

Let $\mathbf{y}=(y_1,\ldots,y_n)$ be a nonnegative vector such that  equality (\ref{edef}) holds for $\lambda^{(a)}(Q,H)$. Then,
$$\lambda^{(b)}(Q,H)-\lambda^{(a)}(Q,H)\geq s!\sum_{\{i_{1},\ldots,i_{s}\}\in E(Q,H)}\mathcal{N}(Q,H[\{i_{1},\ldots,i_{s}\}])(y_{i_{1}}^{1/b}\cdots y_{i_{s}}^{1/b}-y_{i_{1}}^{1/a}\cdots y_{i_{s}}^{1/a})\geq 0.$$
This implies that $\lambda^{(p)}(Q,H)$ is increasing in $p$.

Now, let $\mathbf{z}=(z_1,\ldots,z_n)$ be a nonnegative vector such that equality (\ref{edef}) holds for $\lambda^{(b)}(Q,H)$.
Then,
\begin{displaymath}
\begin{split}
0\leq\lambda^{(b)}(Q,H)-\lambda^{(a)}(Q,H)&\leq
s!\sum_{\{i_{1},\ldots,i_{s}\}\in E(Q,H)}\mathcal{N}(Q,H[\{i_{1},\ldots,i_{s}\}])(z_{i_{1}}^{1/b}\cdots z_{i_{s}}^{1/b}-z_{i_{1}}^{1/a}\cdots z_{i_{s}}^{1/a})\\
&\leq (b-a)s!\mathcal{N}(Q,H).
\end{split}
\end{displaymath}
Therefore,  $\lambda^{(p)}(Q,H)$ satisfies the Lipschitz condition and  is thus continuous.

By the definition of $\lambda^{(p)}(Q,H)$, it is evident that $\lambda^{(p)}(Q,H)\leq s!\mathcal{N}(Q,H)$.
On the other hand, taking the $n$-vector $\mathbf{x}=(n^{-1/p},\ldots,n^{-1/p})$ yields
$$\lambda^{(p)}(Q,H)\geq P_{Q,H}(\mathbf{x})=s!\mathcal{N}(Q,H)/n^{s/p}.$$
Thus, we obtain $$s!\mathcal{N}(Q,H)/n^{s/p}\leq \lambda^{(p)}(Q,H)\leq s!\mathcal{N}(Q,H),$$
which implies $\lim\limits_{p\to \infty}\lambda^{(p)}(Q,H)=s!\mathcal{N}(Q,H)$. 
This completes the proof.
\end{proof}

For a vertex subset $U\subseteq V(H)$ of an $n$-vertex $r$-graph $H$, we write $x_{U}=\Pi_{v\in U}x_{v}$. For $p>1$, the principal Q-eigenvector $\mathbf{x}=(x_{1},\ldots,x_{n})$ of  $H$ satisfies the following system eigenequations derived from Lagrange's method:
\begin{equation}\label{eigen}
\lambda^{(p)}(Q,H)x_{i}^{p-1}=(s-1)!\sum_{I\in E_{Q,H}(i)}\mathcal{N}(Q,H[I])x_{I\backslash\{i\}},~ \mbox{for}~
1\leq i\leq n.
\end{equation}

\begin{lem}\label{incr}
Let $p\geq1$, and let $Q$ be an $s$-vertex  $r$-graph and $H$ be an $n$-vertex $r$-graph. Then function
$$f_{Q,H}(p)=\bigg(\frac{\lambda^{(p)}(Q,H)}{s!\mathcal{N}(Q,H)}\bigg)^{p}$$
is decreasing in $p$.
\end{lem}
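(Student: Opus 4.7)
The plan is to recast $\lambda^{(p)}(Q,H)$ as the maximum of an expectation on a probability simplex, after which $f_{Q,H}(p)$ becomes a supremum of pointwise-decreasing functions of $p$ and the monotonicity falls out by a standard envelope argument. We may assume $\mathcal{N}(Q,H) > 0$, else the statement is vacuous. Put $C := s!\mathcal{N}(Q,H)$, and for $I \in E(Q,H)$ set $c_I := s!\mathcal{N}(Q,H[I])/C$, so that $(c_I)_{I \in E(Q,H)}$ is a probability distribution. Starting from the equivalent definition (\ref{edef}) and substituting $z_i = |x_i|^p$, the constraint $\|\mathbf{x}\|_p = 1$ becomes $\mathbf{z} \in \Delta_n$ (the standard simplex in $\mathbb{R}^n$), and one obtains
\[
\frac{\lambda^{(p)}(Q,H)}{C} \;=\; \max_{\mathbf{z} \in \Delta_n} \mathbb{E}_{I}\!\bigl[z_I^{1/p}\bigr],
\qquad z_I := \prod_{i\in I} z_i,
\]
where $\mathbb{E}_I$ denotes expectation with respect to $(c_I)$.

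The key pointwise claim is: for every fixed $\mathbf{z} \in \Delta_n$, the function $g_{\mathbf{z}}(p) := \bigl(\mathbb{E}_I[z_I^{1/p}]\bigr)^p$ is non-increasing in $p \geq 1$. This is really the classical monotonicity of $L^q$-norms on a probability space; after the substitution $q = 1/p$, it becomes the fact that $q \mapsto (\mathbb{E}_I[z_I^q])^{1/q}$ is non-decreasing in $q$. A direct derivation via logarithmic differentiation gives
\[
\frac{d}{dp}\log g_{\mathbf{z}}(p) \;=\; \frac{\mathbb{E}_I[Z_I]\log\mathbb{E}_I[Z_I] - \mathbb{E}_I[Z_I\log Z_I]}{\mathbb{E}_I[Z_I]} \;\leq\; 0,
\]
where $Z_I := z_I^{1/p} \in [0,1]$; the inequality is Jensen's inequality applied to the convex function $x \mapsto x\log x$ (with the usual convention $0\log 0 = 0$).

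To conclude, fix $1 \leq a < b$ and let $\mathbf{z}^{(a)}, \mathbf{z}^{(b)} \in \Delta_n$ be maximizers for the exponents $a$ and $b$ respectively. An envelope-type comparison then yields
\[
f_{Q,H}(a) \;=\; g_{\mathbf{z}^{(a)}}(a) \;\geq\; g_{\mathbf{z}^{(b)}}(a) \;\geq\; g_{\mathbf{z}^{(b)}}(b) \;=\; f_{Q,H}(b),
\]
where the first inequality uses maximality of $\mathbf{z}^{(a)}$ at exponent $a$ and the second is the pointwise claim applied to $\mathbf{z} = \mathbf{z}^{(b)}$. The only real content is the pointwise monotonicity of $g_{\mathbf{z}}$, which reduces to the one-line Jensen inequality above; I do not anticipate any serious obstacle beyond carefully handling the boundary case where some $z_i$ vanish, which is absorbed by the continuity of $x \mapsto x\log x$ at $0$.
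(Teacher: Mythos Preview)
Your proof is correct and follows essentially the same route as the paper's: both take the eigenvector at the larger exponent $b$, apply the Power-Mean inequality (your Jensen/$L^q$-monotonicity claim for $g_{\mathbf z}$ is exactly this) to the $c_I$-weighted average of the products, and then use the reparametrization $y_i=x_i^{b/a}$ (equivalently your simplex substitution $z_i=x_i^p$) to bound by $\lambda^{(a)}(Q,H)$. The paper's version is a touch shorter---it quotes Power-Mean directly rather than differentiating---but the content is identical.
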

\begin{proof}
Set $\beta\geq\alpha\geq1$ and $\mathcal{N}:=\mathcal{N}(Q,H)$.  Let $\mathbf{x}=(x_{1},\ldots,x_{n})$ be a principal $Q$-eigenvector corresponding to $\lambda^{(\beta)}(Q,H)$. Using Power-Mean inequality, we obtain
$$\frac{\lambda^{(\beta)}(Q,H)}{s!\mathcal{N}}=\frac{1}{\mathcal{N}}
\sum_{I\in E(Q,H)}\mathcal{N}(Q,H[I])x_{I}
\leq\bigg(\frac{1}{\mathcal{N}}
\sum_{I\in E(Q,H)}\mathcal{N}(Q,H[I])(x_{I})^{\beta/\alpha}\bigg)^{\alpha/\beta}.$$
Note that $$\big(x_{1}^{\beta/\alpha}\big)^{\alpha}+\cdots+\big(x_{n}^{\beta/\alpha}\big)^{\alpha}=x_{1}^{\beta}+\cdots+x_{n}^{\beta}=1.$$
Thus, we have
$$\frac{1}{\mathcal{N}}
\sum_{I\in E(Q,H)}\mathcal{N}(Q,H[I])(x_{I})^{\beta/\alpha}\leq\frac{1}{s!\mathcal{N}}
\lambda^{(\alpha)}(Q,H),$$
and so $$\bigg(\frac{\lambda^{(\beta)}(Q,H)}{s!\mathcal{N}}\bigg)^{\beta}\leq \bigg(\frac{\lambda^{(\alpha)}(Q,H)}{s!\mathcal{N}}\bigg)^{\alpha},$$
completing the proof.
\end{proof}

We conclude this section with the following obvious result.
\begin{prop}
Let $p\geq1$, and let $Q$ be an $s$-vertex  $r$-graph and $H$ be an $n$-vertex $r$-graph. If $G$ is a subgraph of $H$, then
$\lambda^{(p)}(Q,G)\leq \lambda^{(p)}(Q,H).$ 
\end{prop}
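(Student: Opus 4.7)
The plan is to exploit the variational definition of $\lambda^{(p)}$ together with the fact, established earlier in the paper, that a nonnegative principal $Q$-eigenvector always exists. Since $G$ sits inside $H$ as a sub-hypergraph, one can use an extremal nonnegative vector on $V(G)$ as a test vector on $V(H)$ by padding with zeros.

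More concretely, I would first choose a principal $Q$-eigenvector $\mathbf{y}=(y_v)_{v\in V(G)}$ of $G$ with $y_v\geq0$ and $\|\mathbf{y}\|_p=1$, so that $P_{Q,G}(\mathbf{y})=\lambda^{(p)}(Q,G)$. Next, define $\mathbf{x}=(x_v)_{v\in V(H)}$ by $x_v=y_v$ for $v\in V(G)$ and $x_v=0$ for $v\in V(H)\setminus V(G)$; clearly $\|\mathbf{x}\|_p=\|\mathbf{y}\|_p=1$. Then in the sum
$$P_{Q,H}(\mathbf{x})=s!\sum_{\{i_1,\ldots,i_s\}\in\binom{V(H)}{s}}\mathcal{N}(Q,H[\{i_1,\ldots,i_s\}])\,x_{i_1}\cdots x_{i_s},$$
every term indexed by an $s$-subset $I\not\subseteq V(G)$ contains at least one factor $x_v=0$ and therefore vanishes. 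For $I\subseteq V(G)$, the factor $x_I=y_I$ is nonnegative, while $G[I]$ is a sub-hypergraph of $H[I]$, so the number of copies of $Q$ can only grow, giving $\mathcal{N}(Q,H[I])\geq\mathcal{N}(Q,G[I])$. Summing, we obtain
$$\lambda^{(p)}(Q,H)\;\geq\;P_{Q,H}(\mathbf{x})\;\geq\;s!\sum_{I\in\binom{V(G)}{s}}\mathcal{N}(Q,G[I])\,y_I\;=\;P_{Q,G}(\mathbf{y})\;=\;\lambda^{(p)}(Q,G).$$

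There is really no serious obstacle here; the one point requiring (mild) care is ensuring that the test vector can be chosen nonnegative, so that zero-padding preserves the $\ell_p$-norm and each extra term in $P_{Q,H}$ compared to $P_{Q,G}$ is nonnegative rather than of uncertain sign. This is precisely guaranteed by the existence of a principal (nonnegative) $Q$-eigenvector, which was noted right after the definition of $\lambda^{(p)}(Q,H)$ in the introduction, so the argument closes cleanly.
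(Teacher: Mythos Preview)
Your proof is correct and is precisely the natural variational argument one would give. The paper itself does not supply a proof of this proposition: it is stated as ``the following obvious result'' and left unproved, so your write-up fills in exactly the expected details.
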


\section{Extremal $(p,Q)$-spectral radius of hereditary families}
In this section,  we show that for any hereditary property  $\mathcal{P}$ of $r$-graphs, the $Q$-density of $\mathcal{P}$ is equal to its $(p,Q)$-spectral density when $p>1$, namely $\pi(Q,\mathcal{P})=\lambda^{(p)}(Q,\mathcal{P})$. We then investigate the $(p,Q)$-spectral radius of  hereditary families which satisfy  $\pi(Q,\mathcal{P})=\lambda^{(1)}(Q,\mathcal{P})$.

\begin{fact}[\hspace{1sp}\cite{ZLS2025}]\label{fact}
If $p>1$ and $s\geq2$, then the function
$$f(x)=\frac{1-sx}{(1-x)^{s/p}}$$
is decreasing for $0\leq x<1$.
\end{fact}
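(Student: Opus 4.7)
The plan is to reduce the monotonicity of $f$ to the sign of a single linear polynomial via direct differentiation. Writing $f(x)=(1-sx)(1-x)^{-s/p}$ and applying the product rule yields
\[
f'(x) = -s(1-x)^{-s/p} + (1-sx)\cdot\frac{s}{p}(1-x)^{-s/p-1}.
\]
Factoring out the strictly positive quantity $\frac{s}{p}(1-x)^{-s/p-1}$ on $[0,1)$ and collecting terms inside the bracket, one obtains
\[
f'(x) = \frac{s}{p}(1-x)^{-s/p-1}\bigl[(1-p)+(p-s)x\bigr].
\]
Thus it suffices to prove that the affine function $g(x):=(1-p)+(p-s)x$ is strictly negative on $[0,1)$.

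Next I would handle this by a short case split on the sign of $p-s$. If $p\le s$, then $p-s\le 0$, so $g$ is non-increasing in $x$ and $g(x)\le g(0)=1-p<0$ since $p>1$. If instead $p>s$, then $g$ is strictly increasing, so
\[
\sup_{0\le x<1}g(x)=g(1^{-})=(1-p)+(p-s)=1-s,
\]
and the hypothesis $s\ge 2$ gives $1-s\le -1<0$. In either case $f'(x)<0$ throughout $[0,1)$, so $f$ is in fact strictly decreasing there.

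There is no real obstacle here; the argument is a one-variable calculus exercise. The only point worth noting is that both hypotheses enter in a clean way: $p>1$ is exactly what makes $g(0)=1-p$ negative in the regime $p\le s$, while $s\ge 2$ is exactly what controls the boundary value $g(1^{-})=1-s$ in the complementary regime $p>s$.
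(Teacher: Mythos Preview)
Your proof is correct: the derivative computation and the case split on the sign of $p-s$ are both clean, and each hypothesis is used exactly where you indicate. Note that the paper does not actually prove this Fact --- it is quoted from \cite{ZLS2025} --- so there is no in-paper argument to compare against; your direct differentiation is the natural approach and would serve perfectly well as a self-contained justification.
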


For a vector $\mathbf{x}\in \mathbb{R}^{n}$, we use the notation $\mathbf{x}_{\textup{min}}$ to represent the smallest element in the vector $\mathbf{x}$.

\begin{thm}\label{exi}
Let $p\geq 1$, and let $Q$ be an $r$-graph on $s$ vertices. If $\mathcal{P}$ is a hereditary property of $r$-graphs, then the limit
$$\lambda^{(p)}(Q,\mathcal{P})=\lim\limits_{n\to \infty}\lambda^{(p)}(Q,\mathcal{P}_{n})n^{s/p-s}$$
exists. If $p=1$, then $\lambda^{(1)}(Q,\mathcal{P}_{n})$ is increasing, and so
$$\lambda^{(1)}(Q,\mathcal{P}_{n})\leq \lambda^{(1)}(Q,\mathcal{P}).$$
If $p>1$, then $\lambda^{(p)}(Q,\mathcal{P})$ satisfies
$$\lambda^{(p)}(Q,\mathcal{P})\leq\frac{\lambda^{(p)}(Q,\mathcal{P}_{n})n^{s/p}}{(n)_{s}},$$
where $(n)_{s}=n(n-1)\cdots (n-s+1)$.
\end{thm}

\begin{proof}
Let $H\in \mathcal{P}_{n}$
be an $r$-graph satisfying $\lambda^{(p)}(Q,H)=\lambda^{(p)}(Q,\mathcal{P}_{n})$, and let  $\mathbf{x}=(x_1,\ldots,x_n)$ be a principal $Q$-eigenvector corresponding to $\lambda^{(p)}(Q,H)$.
Through previous assumption about hereditary properties, we have 
$$\lambda^{(p)}(Q,\mathcal{P}_{n})\leq \lambda^{(p)}(Q,H+u)\leq \lambda^{(p)}(Q,\mathcal{P}_{n+1}),$$
where $u\notin V(H)$ and $H+u\in \mathcal{P}_{n+1}$ is an $r$-graph with vertex set $V(H+u)=V(H)\cup \{u\}$ and edge set $E(H+u)=E(H)$.  Thus, $\lambda^{(p)}(Q,\mathcal{P}_{n})$ is increasing  in $n$.

Recall that $\mathcal{N}(Q,K_{s}^{r})=\frac{s!}{|Aut(Q)|}\leq s!$. For $p=1$, by Maclaurin's inequality, we have
$$\lambda^{(1)}(Q,\mathcal{P}_{n})\leq s!\sum_{\{i_{1},\ldots,i_{s}\}\in \binom{[n]}{s}}s!x_{i_{1}}\cdots x_{i_{s}}\leq s!(x_{1}+\cdots+x_{n})^{s}= s!.$$
Thus, the sequence $\Big\{\lambda^{(1)}(Q,\mathcal{P}_{n})\Big\}_{n=1}^{\infty}$  converges to a limit $\lambda$,  and we conclude
$$\lambda=\lim\limits_{n\to \infty}\lambda^{(1)}(Q,\mathcal{P}_{n})n^{s-s}=\lambda^{(1)}(Q,\mathcal{P}).$$
For $p>1$, let $k\in V(H)$ be a vertex with $x_{k}=\mathbf{x}_{\textup{min}}$, and let $\mathbf{x}'$ be the $(n-1)$-vector obtained from $\mathbf{x}$ by removing the component $x_{k}$.  By (\ref{eigen}), we have
$$P_{Q,H-k}(\mathbf{x}')=\lambda^{(p)}(Q,H)-s!x_{k}\sum_{I\in E_{Q,H}(k)}\mathcal{N}(Q,H[I])x_{I\backslash\{k\}}=
\lambda^{(p)}(Q,\mathcal{P}_{n})-s\lambda^{(p)}(Q,\mathcal{P}_{n})x_{k}^{p}.$$
Since $\mathcal{P}$ is hereditary, $H-k\in \mathcal{P}_{n-1}$. Therefore,
$$\lambda^{(p)}(Q,\mathcal{P}_{n})(1-sx_{k}^{p})=P_{Q,H-k}(\mathbf{x}')\leq \lambda^{(p)}(Q,H-k)(\|\mathbf{x}' \|_{p}^{s})
\leq \lambda^{(p)}(Q,\mathcal{P}_{n-1})(1-x_{k}^{p})^{s/p},$$
or equivalently,
\begin{equation}\label{e8}
\frac{\lambda^{(p)}(Q,\mathcal{P}_{n-1})}{\lambda^{(p)}(Q,\mathcal{P}_{n})}\geq \frac{1-sx_{k}^{p}}{(1-x_{k}^{p})^{s/p}}.
\end{equation}
Note that $(\mathbf{x}_{\textup{min}})^{p}\leq 1/n$,
by (\ref{e8}) and Fact \ref{fact}, we have
\begin{equation*}
\frac{\lambda^{(p)}(Q,\mathcal{P}_{n-1})}{\lambda^{(p)}(Q,\mathcal{P}_{n})}
\geq\frac{1-s(\mathbf{x}_{\textup{min}})^{p}}{(1-(\mathbf{x}_{\textup{min}})^{p})^{s/p}}
\geq\frac{1-s/n}{(1-1/n)^{s/p}}.
\end{equation*}
This implies that
$$\frac{\lambda^{(p)}(Q,\mathcal{P}_{n-1})(n-1)^{s/p}}{(n-1)_{s}}\geq \frac{\lambda^{(p)}(Q,\mathcal{P}_{n})n^{s/p}}{(n)_{s}}.$$
Therefore, the sequence $\Big\{\frac{\lambda^{(p)}(Q,\mathcal{P}_{n})n^{s/p}}{(n)_{s}}\Big\}_{n=1}^{\infty}$ is decreasing and hence convergent. This completes the proof.
\end{proof}

\subsection{The equivalence of $\lambda^{(p)}(Q,\mathcal{P})$ and $\pi(Q,\mathcal{P})$}
Given a hereditary property $\mathcal{P}$ of $r$-graphs. For $H\in \mathcal{P}_{n}$ with $\mathcal{N}(Q,H)=ex(Q,\mathcal{P}_{n})$, the $n$-vector $\mathbf{x}=(n^{-1/p},\ldots,n^{-1/p})$ yields
\begin{equation}\label{zhy}
\lambda^{(p)}(Q,H)\geq P_{Q,H}(\mathbf{x})=s!\mathcal{N}(Q,H)/n^{s/p}
=s!ex(Q,\mathcal{P}_{n})/n^{s/p}.
\end{equation}
Thus
$$\lambda^{(p)}(Q,\mathcal{P}_{n})\geq \lambda^{(p)}(Q,H) \geq s!ex(Q,\mathcal{P}_{n})/n^{s/p},$$
which implies
$$\frac{\lambda^{(p)}(Q,\mathcal{P}_{n})n^{s/p}}{(n)_{s}}\geq \frac{ex(Q,\mathcal{P}_{n})}{\binom{n}{s}}.$$
Taking  $n\to \infty$ and applying Theorem \ref{exi},   we obtain for $p\geq1$,
\begin{equation}\label{pp}
\lambda^{(p)}(Q,\mathcal{P})\geq\pi(Q,\mathcal{P}).
 \end{equation}

We now state one of our main results: we show that for $p>1$, equality in inequality (\ref{pp}) always  holds. This significantly extends the result of Nikiforov \cite[Theorem 12]{N2014A}.
\begin{thm}\label{t1}
If $Q$ is an $r$-graph and $\mathcal{P}$ is a hereditary property of $r$-graphs, then for every $p>1$,
$$\lambda^{(p)}(Q,\mathcal{P})=\pi(Q,\mathcal{P}).$$
\end{thm}

In the following, we present several lemmas necessary for the proof of Theorem \ref{t1}.

\begin{lem}\label{t3}
Let $Q$ be an $r$-graph on $s$ vertices, and $\mathcal{P}$ be a hereditary property of $r$-graphs with $\lambda^{(p)}(Q,\mathcal{P})>0$. If $p>1$ and $\lambda_{n}^{(p)}:=
\lambda^{(p)}(Q,\mathcal{P}_{n})$, then there exist infinitely many $n$ such that
$$\frac{\lambda_{n-1}^{(p)}(n-1)^{s/p}}{(n-1)_{s}}-\frac{\lambda_{n}^{(p)}n^{s/p}}{(n)_{s}}<\frac{1}{n\log n}\cdot\frac{\lambda_{n}^{(p)}n^{s/p}}{(n)_{s}}.$$
\end{lem}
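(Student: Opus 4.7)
The plan is to argue by contradiction, treating the quantity
\[
a_n := \frac{\lambda_n^{(p)} n^{s/p}}{(n)_s}
\]
as the central object. By Theorem \ref{exi}, the sequence $\{a_n\}$ is monotonically decreasing. Moreover, since $(n)_s/n^s \to 1$ as $n \to \infty$, we have
\[
\lim_{n \to \infty} a_n \;=\; \lim_{n \to \infty} \lambda_n^{(p)} n^{s/p - s} \cdot \frac{n^s}{(n)_s} \;=\; \lambda^{(p)}(Q,\mathcal{P}) \;>\; 0
\]
by hypothesis. Denote this positive limit by $L$. The lemma then amounts to saying that the inequality $a_{n-1} - a_n < a_n/(n\log n)$ must hold for infinitely many $n$.

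Suppose for contradiction that there exists $N$ such that for every $n \geq N$,
\[
a_{n-1} - a_n \;\geq\; \frac{a_n}{n \log n},
\]
which rearranges to $a_{n-1} \geq a_n\bigl(1 + 1/(n\log n)\bigr)$. Iterating this inequality from $N+1$ up to some $n > N$ yields
\[
a_N \;\geq\; a_n \prod_{k=N+1}^{n} \left(1 + \frac{1}{k \log k}\right).
\]
Taking logarithms and using $\log(1+x) \geq x/2$ for small $x > 0$, the divergence of the product follows from the divergence of the harmonic-type series $\sum_{k \geq 2} \frac{1}{k \log k}$, which is standard (e.g., by the integral test, since $\int \frac{dx}{x \log x} = \log \log x$).

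Consequently, as $n \to \infty$, the right-hand side tends to $L \cdot \infty = \infty$ because $a_n \to L > 0$, while the left-hand side $a_N$ is a fixed finite number. This contradiction shows that the supposed $N$ cannot exist, so the desired strict inequality holds for infinitely many $n$, completing the argument. The hypothesis $\lambda^{(p)}(Q,\mathcal{P}) > 0$ is used precisely at this last step to ensure the tail of $\{a_n\}$ is bounded away from zero; it is the only place where the strict positivity of the density enters, and it is what allows us to convert divergence of the product into the desired contradiction. No serious obstacle is anticipated — the main subtlety is simply to be careful that Theorem \ref{exi} indeed gives monotonicity of the sequence $a_n$ (not merely of $\lambda_n^{(p)} n^{s/p-s}$), which it does since $n^s/(n)_s \to 1$ monotonically from above.
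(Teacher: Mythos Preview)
Your proposal is correct and follows essentially the same argument as the paper: both proceed by contradiction and exploit the divergence of $\sum 1/(n\log n)$ together with the bound $a_n \geq \lambda^{(p)}(Q,\mathcal{P}) > 0$ from Theorem~\ref{exi}. The only cosmetic difference is that the paper telescopes additively (summing $a_{n-1}-a_n \geq a_n/(n\log n) \geq L/(n\log n)$), whereas you iterate multiplicatively via $a_{n-1} \geq a_n(1+1/(n\log n))$; these are equivalent.

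One small remark: your closing parenthetical justification (``since $n^s/(n)_s \to 1$ monotonically from above'') is not the right reason for the monotonicity of $a_n$---Theorem~\ref{exi} establishes $a_{n-1}\geq a_n$ directly in its proof, not via that factor. This does not affect your argument, since your appeal to Theorem~\ref{exi} for the monotonicity of $a_n$ is correct on its face.
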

\begin{proof}
Assume for a contradiction that there exists $n_{0}$ such that for all $n\geq n_{0}$,
$$\frac{\lambda_{n-1}^{(p)}(n-1)^{s/p}}{(n-1)_{s}}-\frac{\lambda_{n}^{(p)}n^{s/p}}{(n)_{s}}\geq\frac{1}{n\log n}\cdot\frac{\lambda_{n}^{(p)}n^{s/p}}{(n)_{s}}.$$
Summing the inequalities for all $n_0,n_0+1,\ldots, k$, we get
\begin{eqnarray*}
\frac{\lambda_{n_{0}-1}^{(p)}(n_{0}-1)^{s/p}}{(n_{0}-1)_{s}}-\frac{\lambda_{k}^{(p)}k^{s/p}}{(k)_{s}}
&=& \sum_{n=n_{0}}^{k}
\left( \frac{\lambda_{n-1}^{(p)}(n-1)^{s/p}}{(n-1)_{s}}-\frac{\lambda_{n}^{(p)}n^{s/p}}{(n)_{s}} \right) \\
&\geq& \sum_{n=n_{0}}^{k} \frac{1}{n\log n}\cdot\frac{\lambda_{n}^{(p)}n^{s/p}}{(n)_{s}}\\
&\geq& \lambda^{(p)}(Q,\mathcal{P})\sum_{n=n_{0}}^{k} \frac{1}{n\log n},
\end{eqnarray*}
where the last inequality follows from Theorem \ref{exi}. Note that the left-hand side is bounded and the right-hand side
diverges. Taking $k$ sufficiently large, we obtain a contradiction.
\end{proof}

\begin{lem}\label{t4}
Let $Q$ be an $r$-graph on $s$ vertices, and $\mathcal{P}$ be a hereditary  property of $r$-graphs with $\lambda^{(p)}(Q,\mathcal{P})>0$. Suppose that $H_{n}\in \mathcal{P}_{n}$ is an $r$-graph satisfying $\lambda^{(p)}(Q,H_{n})=\lambda^{(p)}(Q,\mathcal{P}_{n})$ for $p>1$ and $\mathbf{x}=(x_1,\ldots,x_n)$ is a principal $Q$-eigenvector corresponding to $\lambda^{(p)}(Q,H_{n})$.
Then there exist infinitely many $n$ such that
$$(\mathbf{x}_{\textup{min}})^{p}\geq\frac{1}{n}\Big(1-\frac{p}{(p-1)s\log n}\Big).$$
\end{lem}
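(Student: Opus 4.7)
The plan is to combine Lemma~\ref{t3} with the ratio inequality (\ref{e8}) from the proof of Theorem~\ref{exi}, then apply a Mean Value Theorem estimate to the function $g(x):=(1-sx)/(1-x)^{s/p}$ of Fact~\ref{fact}. Writing $t:=(\mathbf{x}_{\textup{min}})^{p}$, one has $t\leq 1/n$ from $\|\mathbf{x}\|_{p}=1$, and inequality~(\ref{e8}) says exactly that $\lambda_{n-1}^{(p)}/\lambda_{n}^{(p)}\geq g(t)$.

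First, I would rewrite Lemma~\ref{t3} multiplicatively by observing the identity
$$\frac{n^{s/p}/(n)_{s}}{(n-1)^{s/p}/(n-1)_{s}}=\frac{1-s/n}{(1-1/n)^{s/p}}=g(1/n).$$
Lemma~\ref{t3} then gives, for infinitely many $n$, $\lambda_{n-1}^{(p)}<\bigl(1+\tfrac{1}{n\log n}\bigr)\lambda_{n}^{(p)}g(1/n)$. Since $\lambda^{(p)}(Q,\mathcal{P})>0$ together with the monotonicity of $\lambda_{n}^{(p)}$ from Theorem~\ref{exi} forces $\lambda_{n}^{(p)}>0$ for all sufficiently large $n$, dividing by $\lambda_{n}^{(p)}$ and inserting $g(t)\leq \lambda_{n-1}^{(p)}/\lambda_{n}^{(p)}$ yields the central inequality
$$g(t)-g(1/n)<\frac{g(1/n)}{n\log n}.$$

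The next step is the Mean Value Theorem: since $g$ is smooth and strictly decreasing on $[0,1)$, there exists $\eta\in[t,1/n]$ with $g(t)-g(1/n)=|g'(\eta)|(1/n-t)$, whence $1/n-t<g(1/n)/(|g'(\eta)|\,n\log n)$. To reach the target $1/n-t<p/\bigl(s(p-1)n\log n\bigr)$ it therefore suffices to establish, for all $n$ sufficiently large, the two auxiliary estimates (i) $g(1/n)\leq 1$ and (ii) $|g'(\eta)|\geq s(p-1)/p$ for every $\eta\in[0,1/n]$. Assertion~(i) is equivalent to $(1-1/n)^{s/p}\geq 1-s/n$, and follows because $(1-x)^{s/p}-(1-sx)$ vanishes at $x=0$ with strictly positive derivative $s(p-1)/p$ there.

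Assertion~(ii) is the only real technical point. Differentiation yields $g'(x)=-(s/p)(1-x)^{-s/p-1}\bigl[(p-1)-(p-s)x\bigr]$, so $|g'(0)|=s(p-1)/p$ exactly. I would then show $|g'|$ is increasing in a fixed neighbourhood of $0$ by computing
$$\bigl(|g'|\bigr)'(0)=\frac{s}{p^{2}}\bigl[(s+p)(p-1)-p(p-s)\bigr]=\frac{s\bigl(s(2p-1)-p\bigr)}{p^{2}},$$
which is strictly positive for all $s\geq 2$ and $p\geq 1$, since $s(2p-1)\geq 2(2p-1)=4p-2>p$. Hence $|g'(\eta)|\geq |g'(0)|=s(p-1)/p$ for every $\eta\in[0,1/n]$ once $n$ is large enough; combining with (i) delivers $1/n-t<p/\bigl(s(p-1)n\log n\bigr)$, which rearranges to the claimed lower bound on $(\mathbf{x}_{\textup{min}})^{p}$. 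This sign analysis at $0$ is the main technical obstacle; the remainder of the argument is routine manipulation of the inequalities already available from Theorem~\ref{exi} and Lemma~\ref{t3}.
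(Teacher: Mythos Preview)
Your proof is correct and follows the same overall strategy as the paper: combine the ratio bound $g(t)\le\lambda_{n-1}^{(p)}/\lambda_n^{(p)}$ from (\ref{e8}) with Lemma~\ref{t3}, rewritten multiplicatively as $\lambda_{n-1}^{(p)}/\lambda_n^{(p)}<(1+\tfrac{1}{n\log n})\,g(1/n)$, and then extract a lower bound on $t=(\mathbf{x}_{\min})^p$. The difference is in how that final extraction is carried out. The paper argues by contradiction: assuming $t<\tfrac{1}{n}\bigl(1-\tfrac{p}{(p-1)s\log n}\bigr)$, it substitutes this hypothetical bound into $g$ via the monotonicity of Fact~\ref{fact}, then performs an explicit first-order expansion of $(1+\cdot)^{s/p}$ and several algebraic cancellations to reach the impossible inequality $2(p-1)\le 1/\log n$. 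Your Mean Value Theorem argument sidesteps all of that: once $g(t)-g(1/n)<g(1/n)/(n\log n)$ is in hand, MVT converts it directly into $1/n-t<1/\bigl(|g'(\eta)|\,n\log n\bigr)$, and the uniform lower bound $|g'(\eta)|\ge |g'(0)|=s(p-1)/p$ near $0$ (secured by the second-derivative sign check $(|g'|)'(0)=\tfrac{s}{p^2}\bigl(s(2p-1)-p\bigr)>0$) finishes at once. Your route is tidier and makes the role of the constant $s(p-1)/p$ transparent; the paper's route is more hands-on but needs no second-derivative analysis. Both rest on exactly the same inputs from Theorem~\ref{exi}, Lemma~\ref{t3}, and Fact~\ref{fact}.
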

\begin{proof}
Assume, for contradiction,  that there exists $n_{0}$ such that for all $n>n_{0}$,
\begin{equation}\label{e6}
(\mathbf{x}_{\textup{min}})^{p}<\frac{1}{n}\Big(1-\frac{p}{(p-1)s\log n}\Big).
\end{equation}
Set  $\lambda_{n}^{(p)}:=\lambda^{(p)}(Q,\mathcal{P}_{n})$. By Lemma \ref{t3}, we can select  sufficiently large $n>n_{0}$ such that
$$\frac{\lambda_{n-1}^{(p)}(n-1)^{s/p}}{(n-1)_{s}}-\frac{\lambda_{n}^{(p)}n^{s/p}}{(n)_{s}}<\frac{1}{n\log n}\cdot\frac{\lambda_{n}^{(p)}n^{s/p}}{(n)_{s}},$$
and so
\begin{equation}\label{e7}
\frac{\lambda_{n-1}^{(p)}}{\lambda_{n}^{(p)}}<\frac{n^{s/p-1}(n-s)}{(n-1)^{s/p}}\Big(1+\frac{1}{n\log n}\Big).
\end{equation}
Let $k\in V(H_{n})$ be a vertex with $x_{k}=\mathbf{x}_{\textup{min}}$.
Then, by (\ref{e8}) and (\ref{e7}), we obtain
$$\frac{1-sx_{k}^{p}}{(1-x_{k}^{p})^{s/p}}\leq \frac{\lambda_{n-1}^{(p)}}{\lambda_{n}^{(p)}}
\leq \frac{n^{s/p-1}(n-s)}{(n-1)^{s/p}}\Big(1+\frac{1}{n\log n}\Big).$$
Applying Fact \ref{fact} and (\ref{e6}), we derive
$$\frac{1-\frac{s}{n}(1-\frac{p}{(p-1)s\log n})}{\Big({1-\frac{1}{n}(1-\frac{p}{(p-1)s\log n})}\Big)^{s/p}}
\leq \frac{1-sx_{k}^{p}}{(1-x_{k}^{p})^{s/p}}\leq\frac{n^{s/p-1}(n-s)}{(n-1)^{s/p}}\Big(1+\frac{1}{n\log n}\Big),$$
and hence
$$\frac{\Big(n-s+\frac{p}{(p-1)\log n}\Big)n^{s/p-1}}
{\Big(n-1+\frac{p}{(p-1)s\log n}\Big)^{s/p}}
\leq\frac{n^{s/p-1}(n-s)}{(n-1)^{s/p}}\Big(1+\frac{1}{n\log n}\Big).$$
This can be simplified  to
\begin{equation}\label{subs}
1+\frac{p}{(p-1)(n-s)\log n}\leq\Big(1+\frac{p}{(p-1)s(n-1)\log n}\Big)^{s/p}\Big(1+\frac{1}{n\log n}\Big).
\end{equation}
For sufficiently large $n$, we have
\begin{displaymath}
\begin{split}
\Big(1+\frac{p}{(p-1)s(n-1)\log n}\Big)^{s/p}
&=1+\frac{1}{(p-1)(n-1)\log n}+O\Big(\frac{1}{(n\log n)^{2}}\Big)\\
&\leq1+\frac{1}{(p-1)(n-1)\log n}+\frac{1}{(p-1)(n-1)(n-2)\log n}\\
&=1+\frac{1}{(p-1)(n-2)\log n}.
\end{split}
\end{displaymath}
Substituting this bound into (\ref{subs}), we obtain
$$1+\frac{p}{(p-1)(n-s)\log n}\leq \Big(1+\frac{1}{(p-1)(n-2)\log n}\Big)\Big(1+\frac{1}{n\log n}\Big).$$
By some cancellations and rearranging, we get
$$\frac{p}{n-s}\leq \frac{1}{n-2}+\frac{p-1}{n}+\frac{1}{n(n-2)\log n}.$$
Noting that $\frac{p}{n-s}\geq\frac{p}{n-2}$, we have
$$2(p-1)\leq\frac{1}{\log n},$$
which leads to a contradiction for sufficiently large $n$.
\end{proof}

\begin{lem}\label{t2}
Let  $p>1$, and let $Q$  be an $s$-vertex $r$-graph and $H$ be an $n$-vertex $r$-graph with $\lambda^{(p)}(Q,H)=\lambda$ and  minimum $Q$-degree $\delta$. Let $\mathbf{x}$ be a principal $Q$-eigenvector corresponding to  $\lambda$. Then
$$\Big(\frac{\lambda (\mathbf{x}_{\textup{min}})^{p-1}}{(s-1)!}\Big)^{p}
\leq \frac{s!\binom{n}{s-1}\delta^{p-1}}{n^{s-1}}-(s!\tbinom{n}{s-1}\delta^{p-1}-\delta^{p})(\mathbf{x}_{\textup{min}})^{p(s-1)}.$$
\end{lem}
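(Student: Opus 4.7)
Fix a vertex $k\in V(H)$ attaining $x_k=\mathbf{x}_{\textup{min}}=:m$, and for brevity write $w_I:=\mathcal{N}(Q,H[I])$, $y_I:=x_{I\setminus\{k\}}$, and $\Sigma:=\sum_{I\in E_{Q,H}(k)}w_Iy_I$.  Evaluating the eigenequation~(\ref{eigen}) at $k$ yields $\lambda m^{p-1}=(s-1)!\,\Sigma$, so the left-hand side of the lemma is precisely $\Sigma^{p}$ and the task reduces to bounding $\Sigma^{p}$ from above by the prescribed right-hand side.

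The first step is the weighted Power-Mean Inequality with probability weights $w_I/d_{Q,H}(k)$ applied to the values $y_I$, which gives
\[
\Sigma^{p}\le d_{Q,H}(k)^{\,p-1}\sum_{I\in E_{Q,H}(k)}w_I\,y_I^{\,p}.
\]
Setting $z_v:=x_v^{p}$, I can rewrite $y_I^{\,p}=\prod_{v\in I\setminus\{k\}}z_v$, and the normalization $\|\mathbf{x}\|_p=1$ together with $x_k=m$ supplies the constraints $\sum_{v\ne k}z_v=1-m^p$, $z_k=m^p$, and $z_v\ge m^p$ for all $v$.  Regrouping the inner sum by $J:=I\setminus\{k\}$ as $\sum_{J\subseteq V\setminus\{k\},|J|=s-1}\mathcal{N}(Q,H[J\cup\{k\}])\prod_{v\in J}z_v$, I would use the trivial bound $\mathcal{N}(Q,H[J\cup\{k\}])\le s!$ and Maclaurin's inequality on the $(s{-}1)$-th elementary symmetric polynomial of the $n-1$ nonnegative reals $\{z_v:v\ne k\}$ to produce an upper estimate of the inner sum expressible purely in terms of $n$ and $m^p$.

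The principal obstacle is that the factor $d_{Q,H}(k)^{\,p-1}$ introduced by Power-Mean cannot be replaced by $\delta^{\,p-1}$ directly, since only $d_{Q,H}(k)\ge\delta$ is available --- the wrong direction for an upper bound.  I would circumvent this by \emph{not} passing to Maclaurin immediately; instead, I'd split $\prod_{v\in I\setminus\{k\}}z_v=m^{p(s-1)}+\bigl(\prod z_v-m^{p(s-1)}\bigr)$, so that the ``floor'' part contributes exactly $d_{Q,H}(k)\,m^{p(s-1)}$ to the inner sum, producing $d_{Q,H}(k)^{p}\,m^{p(s-1)}$ overall.  This minimum-value contribution can be reorganized via $d_{Q,H}(k)^{p}m^{p(s-1)}=\delta^{p}m^{p(s-1)}+(d_{Q,H}(k)^p-\delta^p)m^{p(s-1)}$, and the defect $(d_{Q,H}(k)^p-\delta^p)m^{p(s-1)}$ absorbed using the crude upper bound $d_{Q,H}(k)\le s!\binom{n-1}{s-1}\le s!\binom{n}{s-1}$ together with $nm^p\le 1$.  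The ``excess'' part $\prod z_v-m^{p(s-1)}\ge 0$ is then estimated by the Maclaurin/AM-GM route above, contributing the $s!\binom{n}{s-1}\delta^{p-1}/n^{s-1}$ term.  The delicate point is matching the constants exactly: the two contributions must combine so that the $m^{p(s-1)}$ coefficients add to $-(s!\binom{n}{s-1}\delta^{p-1}-\delta^{p})$ and the constant term is exactly $s!\binom{n}{s-1}\delta^{p-1}/n^{s-1}$, which forces each inequality in the chain to be applied with sharp constants.
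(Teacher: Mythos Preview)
Your proposal contains a genuine gap stemming from the choice of the vertex $k$. You take $k$ with $x_k=\mathbf{x}_{\min}$, which makes the eigenequation at $k$ an equality $\lambda m^{p-1}=(s-1)!\,\Sigma$, but then the $Q$-degree $d:=d_{Q,H}(k)$ appearing after Power--Mean/H\"older satisfies only $d\ge\delta$, the wrong direction, as you correctly observe. The workaround you sketch does not close this gap: carrying out your own steps (equivalently, the paper's computation but with $d$ in place of $\delta$) yields
\[
\Sigma^{p}\ \le\ s!\tbinom{n}{s-1}\,d^{\,p-1}\Bigl[\tfrac{1}{n^{s-1}}-m^{p(s-1)}\Bigr]+d^{\,p}m^{p(s-1)},
\]
and since $m^{p}\le 1/n$ forces both bracketed quantities to be nonnegative, the right-hand side is \emph{increasing} in $d$. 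Replacing $d$ by any $\delta\le d$ therefore produces a weaker bound, not the one stated in the lemma. The ``absorption'' you describe---bounding $d\le s!\binom{n}{s-1}$ and invoking $nm^{p}\le 1$---only pushes things further in the wrong direction and cannot recover the target constants.

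The missing idea, and the paper's actual approach, is to choose $k$ as a vertex of \emph{minimum $Q$-degree}, so that $d_{Q,H}(k)=\delta$ exactly. One then starts with the trivial inequality $\lambda m^{p-1}\le\lambda x_k^{p-1}$ before applying H\"older at $k$, which now introduces precisely $\delta^{p-1}$. The remainder of the argument---splitting $\binom{V}{s-1}$ into those $J$ with $J\cup\{k\}\in E_{Q,H}(k)$ and those without, lower-bounding each product $x_J^{p}$ of $s-1$ coordinates by $m^{p(s-1)}$, and applying Maclaurin's inequality to $\sum_{J}\prod_{v\in J}x_v^{p}$---then goes through with exactly the constants the lemma claims.
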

\begin{proof}
Set  $V:=V(H)$, and let $k\in V$ be a vertex achieving the minimum $Q$-degree $\delta$. Considering the eigenequation for $\lambda^{(p)}(Q,H)$  at  vertex  $k$:
$$\lambda (\mathbf{x}_{\textup{min}})^{p-1}\leq \lambda x^{p-1}_{k}=(s-1)!\sum_{I\in E_{Q,H}(k)}\mathcal{N}(Q,H[I])x_{I\backslash\{k\}}.$$
 By H\"older's inequality,  we have
\begin{equation}\label{e1}
\bigg(\frac{\lambda (\mathbf{x}_{\textup{min}})^{p-1}}{(s-1)!}\bigg)^{p}\leq \delta^{p-1}\sum_{I\in E_{Q,H}(k)}\mathcal{N}(Q,H[I])(x_{I\backslash\{k\}})^{p}.
\end{equation}
Define $T_{1}=\{I_{1}\in \binom{V}{s-1}:I_{1}\cup \{k\} \in E_{Q,H}(k)\}$ and $T_{2}=\{I_{2}\in \binom{V}{s-1}:I_{2}\cup \{k\} \notin E_{Q,H}(k)\}$. Then 
\begin{align}\label{e2}
\begin{split}
\sum_{I\in E_{Q,H}(k)}\mathcal{N}(Q,H[I])(x_{I\backslash\{k\}})^{p}
=&\sum_{I\in \binom{V}{s-1}}s!x_{I}^{p}
-\sum_{I_{1}\in T_{1}}(s!-\mathcal{N}(Q,H[I_{1}\cup \{k\}]))x_{I_{1}}^{p}-\sum_{I_{2}\in T_{2}}s!x_{I_{2}}^{p}\\
\leq& \sum_{I\in \binom{V}{s-1}}s!x_{I}^{p}
-\sum_{I_{1}\in T_{1}}(s!-\mathcal{N}(Q,H[I_{1}\cup \{k\}]))(\mathbf{x}_{\textup{min}})^{p(s-1)}\\
&-\sum_{I_{2}\in T_{2}}s!(\mathbf{x}_{\textup{min}})^{p(s-1)}\\
=&\sum_{I\in \binom{V}{s-1}}s!x_{I}^{p}-\big(s!\tbinom{n}{s-1}-\delta\big)(\mathbf{x}_{\textup{min}})^{p(s-1)}.
\end{split}
\end{align}
By Maclaurin's inequality, we have
\begin{equation}\label{e3}
\sum_{I\in \binom{V}{s-1}}x_{I}^{p}\leq \binom{n}{s-1}\Bigg(n^{-1}\sum_{i\in V}x_{i}^{p}\Bigg)^{s-1}=
\frac{\binom{n}{s-1}}{n^{s-1}}.
\end{equation}
The result is obtained by combining the inequalities (\ref{e1}), (\ref{e2}), and (\ref{e3}).
\end{proof}

Now we present the proof of Theorem \ref{t1}. 

\begin{proof}[{\bf Proof of Theorem \ref{t1}}]
Observe that if $\lambda^{(p)}(Q,\mathcal{P})=0$, then it follows from  inequality (\ref{pp})  that $\pi(Q,\mathcal{P})=0$.  

Next,  assume  $\lambda^{(p)}(Q,\mathcal{P})>0$.
Suppose that $H_{n}\in \mathcal{P}_{n}$ is an $r$-graph with $\lambda^{(p)}(Q,H_{n})=\lambda^{(p)}(Q,\mathcal{P}_{n})=:\lambda$
and minimum $Q$-degree $\delta$. 
Let $\mathbf{x}=(x_1,\ldots,x_n)$ be a principal $Q$-eigenvector corresponding to $\lambda^{(p)}(Q,H_{n})$.
By Lemma \ref{t4},  there exists an increasing infinite  sequence   $\{n_{i}\}_{i=1}^{\infty}$ of positive integers such that
for each $n\in \{n_1,n_2,\ldots \}$, 
$$(\mathbf{x}_{\textup{min}})^{p}\geq\frac{1}{n}\Big(1-\frac{p}{(p-1)s\log n}\Big).$$

From Theorem  \ref{exi} and Lemma \ref{t2}, we derive
\begin{align*}
\begin{split}
\big(1-o(1)\big)\Big(\frac{\lambda^{(p)}(Q,\mathcal{P})}{(s-1)!}\Big)^{p}\cdot
\frac{((n)_{s})^{p}}{n^{s+p-1}}
&\leq\Big(\frac{\lambda (\mathbf{x}_{\textup{min}})^{p-1}}{(s-1)!}\Big)^{p}\\
&\leq \frac{s!\binom{n}{s-1}\delta^{p-1}}{n^{s-1}}-(s!\tbinom{n}{s-1}\delta^{p-1}-\delta^{p})(\mathbf{x}_{\textup{min}})^{p(s-1)}\\
&\leq \frac{s!\binom{n}{s-1}\delta^{p-1}}{n^{s-1}}-\frac{s!\binom{n}{s-1}\delta^{p-1}}{n^{s-1}}\big(1-o(1)\big)+\frac{\delta^{p}}{n^{s-1}}\\
&\leq o(n^{(s-1)(p-1)})+\frac{\delta^{p}}{n^{s-1}}.
\end{split}
\end{align*}

Since $\delta\leq \frac{s\mathcal{N}(Q,H_{n})}{n}\leq \frac{sex(Q,\mathcal{P}_{n})}{n}$, it follows that
$$(1-o(1))(\lambda^{(p)}(Q,\mathcal{P}))^{p}
\leq o(1)+\Big(\frac{ex(Q,\mathcal{P}_{n})}{\binom{n}{s}}\Big)^{p},$$
where the term $o(1)$ tends to $0$ as $n\to \infty$.
Consequently,
\[
(\lambda^{(p)}(Q,\mathcal{P}))^{p}=\lim_{i \to \infty}(1-o(1))(\lambda^{(p)}(Q,\mathcal{P}))^{p}\leq
\lim_{i \to \infty}o(1)+\Big(\frac{ex(Q,\mathcal{P}_{n_{i}})}{\binom{n_{i}}{s}}\Big)^{p}=(\pi(Q,\mathcal{P}))^{p}.
\]
Combining this inequality  with (\ref{pp}) completes the proof of Theorem \ref{t1}.
\end{proof}

\begin{rmk}
The assumption that the hereditary property $\mathcal{P}$ of $r$-graphs is closed under disjoint union with an isolated vertex is only used to ensure the monotonicity of $\lambda^{(1)}(Q,\mathcal{P}_n)$ in $n$. For the case $p>1$, the proof of Theorem \ref{t1} does not rely on this condition. Consequently, the theorem remains valid for any hereditary property $\mathcal{P}$ of $r$-graphs.
\end{rmk}

 The celebrated  Erd\H{o}s-Stone-Simonovits theorem states that
$$\mathrm{ex}(n,F)= \bigg(1-\frac{1}{\chi(F)-1}+o(1) \bigg)\frac{n^{2}}{2},$$ where $\chi(F)$ is the chromatic number of $F$. In \cite{AS2016}, Alon and Shikhelman extended the Erd\H{o}s-Stone-Simonovits theorem to count copies of $K_{s}$.

 \begin{lem}[\hspace{1sp}\cite{AS2016}]\label{eESS}
Let $F$ be a graph with $\chi(F)=k$. Then
$$ex(K_{s},\overline{F}_{n})=\binom{k-1}{s}\bigg(\frac{n}{k-1}\bigg)^{s}+o(n^{s}).$$
 \end{lem}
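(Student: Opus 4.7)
The plan is to establish matching lower and upper bounds on $ex(K_s,\overline{F}_n)$. For the \emph{lower bound}, I would take the Tur\'an graph $T_{k-1}(n)$: since $\chi(T_{k-1}(n))=k-1<k=\chi(F)$, the graph $T_{k-1}(n)$ is $F$-free, and each copy of $K_s$ inside it arises by choosing $s$ of the $k-1$ parts and one vertex from each, giving $\mathcal{N}(K_s,T_{k-1}(n))=\binom{k-1}{s}\big(n/(k-1)\big)^s+O(n^{s-1})$, which matches the claimed expression.

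For the \emph{upper bound}, I would apply Szemer\'edi's regularity lemma together with the Koml\'os-Simonovits counting (key) lemma. Fix $\varepsilon>0$ and auxiliary constants $\eta,\gamma>0$ small relative to $\varepsilon$ and $|V(F)|$. Let $G$ be any $F$-free graph on $n$ vertices, and let $V_1,\ldots,V_M$ be an equitable $\gamma$-regular partition (with $M$ bounded in terms of $\gamma$ alone). Form the reduced graph $R$ on $[M]$ whose edges are the pairs $(V_i,V_j)$ that are $\gamma$-regular with density at least $\eta$. The structural core of the argument is that $R$ must be $K_k$-free: otherwise the counting lemma produces a copy of every balanced blow-up $K_k(t,\ldots,t)$ in $G$ for any fixed $t$, and taking $t\geq|V(F)|$ (so that each color class of a proper $k$-coloring of $F$ fits into one part) forces $F\subseteq G$, a contradiction.

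With $R$ known to be $K_k$-free, Zykov's theorem gives $\mathcal{N}(K_s,R)\leq \mathcal{N}(K_s,T_{k-1}(M))\leq \binom{k-1}{s}(M/(k-1))^s$. Each copy of $K_s$ in $G$ is classified by the tuple of classes its vertices meet: copies with two or more vertices in a single class contribute $O(n^s/M)$; copies whose $s$ classes include an irregular or low-density pair contribute $O((\gamma+\eta)n^s)$; the remainder sits inside an $s$-clique of $R$, each such clique supplying at most $(n/M)^s(1+o(1))$ copies. Summing yields
$$\mathcal{N}(K_s,G)\leq \binom{k-1}{s}\Big(\frac{n}{k-1}\Big)^s+O\big((\gamma+\eta)n^s\big)+O(n^s/M),$$
so choosing $\gamma,\eta$ sufficiently small (making $M$ correspondingly large) absorbs both error terms into $\varepsilon n^s$, producing the required $o(n^s)$ bound.

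The main obstacle will be the careful regularity bookkeeping: verifying that each of the three error classes (multiply-occupied classes, irregular or sparse pairs, and non-edges of $R$) really is absorbed into the $o(n^s)$ term, with bounds uniform in the regularity parameters. A secondary subtlety is pinning down the embedding $F\hookrightarrow K_k(t,\ldots,t)$ so that $t$ depends only on $F$, which follows directly from $\chi(F)=k$ by placing each color class of a proper $k$-coloring into a distinct part.
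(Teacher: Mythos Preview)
The paper does not prove this lemma at all: it is quoted verbatim as a result of Alon and Shikhelman \cite{AS2016} and used as a black box to derive Corollary~\ref{sESS}. There is therefore no ``paper's own proof'' to compare against.

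Your proposed argument is nonetheless correct and is essentially the proof given in the cited source. The lower bound via $T_{k-1}(n)$ is immediate, and the upper bound via the regularity lemma plus the counting lemma---showing the reduced graph $R$ is $K_k$-free and then invoking Zykov's theorem to bound $\mathcal{N}(K_s,R)$---is exactly the standard route. The error accounting you outline (copies with two vertices in one class, copies using an irregular or sparse pair, the remainder indexed by $s$-cliques of $R$) is the right decomposition, and each term is indeed $o(n^s)$ once $\gamma$ and $\eta$ are chosen small and $M$ large. The embedding $F\hookrightarrow K_k(t,\ldots,t)$ with $t=|V(F)|$ via a proper $k$-colouring is precisely the step that converts a $K_k$ in $R$ into a contradiction. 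So your proof is sound; it simply supplies what the paper chose to cite rather than reprove.
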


Applying Theorem \ref{t1} and Lemma \ref{eESS} yields  the following result directly.

\begin{cor}\label{sESS}
Let $p>1$ and $F$ be a graph with $\chi(F)=k$. Then
$$\lambda^{(p)}(K_{s},\overline{F}_{n})=\frac{(k-1)_{s}}{(k-1)^{s}}n^{s-s/p}+o(n^{s-s/p}).$$
 \end{cor}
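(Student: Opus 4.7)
The plan is to combine Lemma \ref{eESS} with Theorem \ref{t1} directly, since both the growth rate of $ex(K_{s},\overline{F}_{n})$ and the identity $\lambda^{(p)}(Q,\mathcal{P})=\pi(Q,\mathcal{P})$ for $p>1$ are already in hand; the corollary is essentially a translation between the two languages.

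First I would compute the $K_{s}$-density $\pi(K_{s},\overline{F})$. By definition $\pi(K_{s},\overline{F})=\lim_{n\to\infty}ex(K_{s},\overline{F}_{n})/\binom{n}{s}$. Plugging in the asymptotic from Lemma \ref{eESS} and using $\binom{n}{s}\sim n^{s}/s!$, together with the identity $s!\binom{k-1}{s}=(k-1)_{s}$, this evaluates to
$$\pi(K_{s},\overline{F})=\frac{(k-1)_{s}}{(k-1)^{s}}.$$

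Next I would invoke Theorem \ref{t1} with $Q=K_{s}$ and $\mathcal{P}=\overline{F}$. Since $p>1$ the theorem gives $\lambda^{(p)}(K_{s},\overline{F})=\pi(K_{s},\overline{F})$, and unpacking the definition of the $(p,Q)$-spectral density yields
$$\lim_{n\to\infty}\frac{\lambda^{(p)}(K_{s},\overline{F}_{n})}{n^{s-s/p}}=\frac{(k-1)_{s}}{(k-1)^{s}}.$$
Multiplying through by $n^{s-s/p}$ and absorbing the vanishing tail into an $o(n^{s-s/p})$ error term produces the stated asymptotic.

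There is no real obstacle: the corollary is genuinely a direct consequence. The only care needed is the brief algebraic simplification of the constant $s!\binom{k-1}{s}/(k-1)^{s}=(k-1)_{s}/(k-1)^{s}$, and the observation that the definition of $\lambda^{(p)}(Q,\mathcal{P})$ is already tailored to deliver exactly the $n^{s-s/p}$-scaled limit that appears in the conclusion.
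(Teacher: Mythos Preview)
Your proposal is correct and matches the paper's approach exactly: the authors also state that the corollary follows directly from Theorem~\ref{t1} and Lemma~\ref{eESS}, and your computation of the constant $\pi(K_{s},\overline{F})=(k-1)_{s}/(k-1)^{s}$ and subsequent unpacking of the definition of $\lambda^{(p)}(Q,\mathcal{P})$ is precisely the intended argument.
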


\begin{rmk}
The case where $p=s=2$ in Corollary \ref{sESS} corresponds to the spectral  Erd\H{o}s-Stone-Simonovits theorem by Nikiforov \cite{N2009}. Thus, Corollary \ref{sESS} can be viewed as a  generalization  of adjacency spectral version of the Erd\H{o}s-Stone-Simonovits theorem.  
\end{rmk}

Gerbner and Palmer \cite{GP2019} gave a further extension  to count arbitrary graphs $H$ using the regularity lemma.
\begin{lem}[\hspace{1sp}\cite{GP2019}]\label{GP}
Let $H$ be an $s$-vertex graph, and let $F$ be a graph with $\chi(F)=k$. Then
$$ex(H,\overline{F}_{n})=ex(H, (\overline{K_{k}})_{n})+o(n^{s}).$$
\end{lem}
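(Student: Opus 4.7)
The strategy is to prove this asymptotic equality via Szemer\'edi's Regularity Lemma coupled with an appropriate Counting (Embedding) Lemma, extending the Alon--Shikhelman argument of Lemma \ref{eESS} from $Q=K_s$ to arbitrary $s$-vertex graphs $Q$.

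For the upper bound $ex(Q,\overline{F}_n)\le ex(Q,\overline{K_k}_n)+o(n^s)$, I take an $n$-vertex $F$-free graph $G$ attaining $ex(Q,\overline{F}_n)$ and apply the Regularity Lemma with a small parameter $\varepsilon>0$ to obtain an $\varepsilon$-regular equipartition $V(G)=V_1\cup\cdots\cup V_M$. Form the cluster graph $R$ on $[M]$ by joining $i$ to $j$ whenever $(V_i,V_j)$ is $\varepsilon$-regular with density at least some fixed $d_0>0$. The crucial structural observation is that $R$ must be $K_k$-free: were $R$ to contain a copy of $K_k$ on parts $V_{i_1},\ldots,V_{i_k}$, then since $F$ is $k$-chromatic and hence embeds into the balanced blow-up $K_k(t)$ for some bounded $t$, the Embedding Lemma would locate a copy of $F$ inside $G$, contradicting $F$-freeness. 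Applying the Counting Lemma to $Q$, together with standard bookkeeping absorbing contributions from non-regular pairs, low-density pairs, and $s$-tuples that repeat a part into an $o(n^s)$ error term, yields
$$\mathcal{N}(Q,G)=\mathcal{N}(Q,R^{[n/M]})+o(n^s),$$
where $R^{[n/M]}$ denotes the balanced $n$-vertex blow-up of $R$. Since $R^{[n/M]}$ inherits $K_k$-freeness from $R$, it follows that $\mathcal{N}(Q,G)\le ex(Q,\overline{K_k}_n)+o(n^s)$.

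The reverse inequality is handled by a symmetric regularity argument applied to an extremal $K_k$-free graph $G^*$: its reduced graph $R^*$ is $K_k$-free, and replacing $G^*$ by the balanced blow-up $R^{*[n/M]}$ changes $\mathcal{N}(Q,\cdot)$ by at most $o(n^s)$. It then suffices to modify $R^*$ at the cluster-graph level into a graph $R'$ whose blow-up is $F$-free while altering the $Q$-count by only $o(n^s)$; this is carried out by a cluster-level edge-modification / graph-removal argument, exploiting the bounded size of $R^*$ and an appropriately slow choice of the limiting parameters $\varepsilon\to 0$ and $M\to\infty$. The main obstacle lies precisely in this lower-bound step, because $K_k$-free graphs need not be $F$-free and can carry $\Theta(n^{v(F)})$ copies of $F$, so a direct application of the graph removal lemma to $G^*$ itself would destroy too many $Q$-copies; the delicate point is therefore to localize the modifications to the bounded-size cluster graph $R^*$, so that the edge removals required to eliminate every homomorphism $F\to R^*$ translate to only an $o(n^s)$ loss of $Q$-copies in the blow-up.
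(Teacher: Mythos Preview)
The paper gives no proof here; Lemma~\ref{GP} is simply quoted from \cite{GP2019}. Your upper-bound argument is the standard regularity argument and is correct: regularize an extremal $F$-free graph $G$, observe via the Embedding Lemma that the cluster graph $R$ must be $K_k$-free, clean $G$ to a subgraph of the blow-up $R^{[n/M]}$ at the cost of $o(n^s)$ copies of $Q$, and use that blow-ups of $K_k$-free graphs are $K_k$-free to conclude $\mathcal{N}(Q,G)\le ex(Q,(\overline{K_k})_n)+o(n^s)$.

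The lower-bound direction, however, has a genuine and irreparable gap: the inequality $ex(Q,\overline{F}_n)\ge ex(Q,(\overline{K_k})_n)-o(n^s)$ is false without further hypotheses on $Q$. Take $Q=F=C_5$, so that $k=\chi(C_5)=3$; then $ex(C_5,\overline{C_5}_n)=0$ for all $n$, whereas $ex(C_5,(\overline{K_3})_n)=\Theta(n^5)$ by Lemma~\ref{BF}. Your proposed fix of modifying the $K_k$-free cluster graph $R^*$ so that its blow-up becomes $F$-free cannot work in such cases: when $R^*$ is (a blow-up of) $C_5$, killing every homomorphism $C_5\to R^*$ forces the removal of a positive proportion of the edges of $R^*$, which destroys $\Theta(n^5)$, not $o(n^5)$, copies of $Q=C_5$ in the $n$-vertex blow-up. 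In fact Gerbner and Palmer establish only the upper bound $ex(Q,\overline{F}_n)\le ex(Q,(\overline{K_k})_n)+o(n^{s})$; the equality sign in the paper's statement of Lemma~\ref{GP} appears to be a misquotation, and your struggle with the reverse direction reflects that, not a flaw in your strategy.
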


Below, we present the spectral version of Lemma \ref{GP}.

\begin{cor}
Let $p>1$, and let $H$ be an $s$-vertex graph and $F$ be a graph with $\chi(F)=k$. Then
$$\lambda^{(p)}(H,\overline{F}_{n})=\lambda^{(p)}(H, (\overline{ K_{k}})_{n})+o(n^{s-s/p}).$$
\end{cor}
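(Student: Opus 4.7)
The plan is a straightforward two-step bootstrapping. First, I would apply Theorem~\ref{t1} separately to the two hereditary properties $\overline{F}$ and $\overline{K_{k}}$ (both classes are closed under induced subgraphs and under adding isolated vertices), which yields
\[
\lambda^{(p)}(Q, \overline{F})=\pi(Q, \overline{F})
\qquad\text{and}\qquad
\lambda^{(p)}(Q, \overline{K_{k}})=\pi(Q, \overline{K_{k}}).
\]

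Next, since $\chi(F)=k$, Lemma~\ref{GP} gives $ex(Q, \overline{F}_{n})=ex(Q,(\overline{K_{k}})_{n})+o(n^{s})$. Dividing by $\binom{n}{s}=\Theta(n^{s})$ and letting $n\to\infty$ forces $\pi(Q, \overline{F})=\pi(Q, \overline{K_{k}})$, and hence also $\lambda^{(p)}(Q, \overline{F})=\lambda^{(p)}(Q, \overline{K_{k}})$ by the previous display.

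Finally, Theorem~\ref{exi} ensures that both normalized sequences $\lambda^{(p)}(Q, \overline{F}_{n})\,n^{s/p-s}$ and $\lambda^{(p)}(Q,(\overline{K_{k}})_{n})\,n^{s/p-s}$ converge, necessarily to the same common limit by the previous step. Their difference is therefore $o(1)$, so clearing the normalization gives $\lambda^{(p)}(Q, \overline{F}_{n})-\lambda^{(p)}(Q,(\overline{K_{k}})_{n})=o(n^{s-s/p})$, which is the claimed conclusion. I do not anticipate any genuine obstacle: the corollary is essentially a formal transfer, via Theorem~\ref{t1}, of the combinatorial Gerbner--Palmer asymptotic into its spectral counterpart, and the only care needed is to observe that convergence of the two normalized sequences to a single value automatically produces the stated error term after multiplication by $n^{s-s/p}$.
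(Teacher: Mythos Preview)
Your proposal is correct and follows essentially the same approach as the paper: apply Theorem~\ref{t1} to both hereditary families, use Lemma~\ref{GP} to identify the two $Q$-densities, and then read off the $o(n^{s-s/p})$ difference from the convergence of the normalized sequences. The paper phrases the intermediate step as $\lambda^{(p)}(Q,\overline{F}_{n})=s!\,ex(Q,\overline{F}_{n})\,n^{-s/p}+o(n^{s-s/p})$ and subtracts the analogous identity for $\overline{K_k}$, but this is just a different packaging of the same convergence argument you give.
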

\begin{proof}
By Theorem \ref{t1}, we have
$$\frac{\lambda^{(p)}(H,\overline{F}_{n})}{n^{s-s/p}}=\big(1+o(1)\big)\frac{ex(H,\overline{F}_{n})}{\binom{n}{s}},$$
and thus,
$$\lambda^{(p)}(H,\overline{F}_{n})=s!ex(H,\overline{F}_{n})n^{-s/p}+o(n^{s-s/p}).$$
Similarly, 
$$\lambda^{(p)}(H,(\overline{K_{k}})_{n})=s!ex(H,(\overline{K_{k}})_{n})n^{-s/p}+o(n^{s-s/p}).$$
By Lemma \ref{GP}, we obtain
\begin{align*}
\begin{split}
\lambda^{(p)}(H,\overline{F}_{n})-\lambda^{(p)}(H,(\overline{K_{k}})_{n})&=
s!(ex(H,\overline{F}_{n})-ex(H,(\overline{K_{k}})_{n}))n^{-s/p}+o(n^{s-s/p})\\
&=o(n^{s-s/p}),
\end{split}
\end{align*}
which completes the proof.
\end{proof}

\subsection{$Q$-flat properties of $r$-graphs}

Consider   an $r$-graph $H$ on $n$ vertices and a sequence of positive integers $k_{1},\ldots,k_{n}$. The \emph{blow-up} of $H$ with respect to $k_{1},\ldots,k_{n}$, denoted by $H(k_{1},\ldots,k_{n})$, is the $r$-graph obtained by replacing each vertex $i\in V(H)$ with a  vertex class   $V_{i}$ (also called a block) of size $k_{i}$,  and if $\{i_{1},\ldots,i_{r}\}\in E(H)$, then $\{i_{1,j_{1}},\ldots,i_{r,j_{r}}\} \in E(H(k_{1},\ldots,k_{n}))$ for every $i_{1,j_{1}}\in V_{i_{1}},\ldots,i_{r,j_{r}}\in V_{i_{r}}$.
A  property $\mathcal{M}$ of $r$-graphs is  \emph{multiplicative} if $H\in \mathcal{M}$  implies that every blow-up $H(k_{1},\ldots,k_{n})$ is also in $\mathcal{M}$ 
(i.e., $\mathcal{M}$  is closed under the blow-up operation).

For an  $s$-vertex $r$-graph $Q$ and
a hereditary property $\mathcal{P}$ of $r$-graphs,  we say that $\mathcal{P}$ is  \emph{Q-flat} if $\lambda^{(1)}(Q,\mathcal{P})=\pi(Q,\mathcal{P})$. We establish the following
sufficient condition for $Q$-flat properties.

 \begin{lem}\label{flat}
Let $Q$ be an  $r$-graph and $\mathcal{P}$ be a  hereditary and multiplicative property of $r$-graphs. Then $\mathcal{P}$  is $Q$-flat, and hence $\lambda^{(p)}(Q,\mathcal{P})=\pi(Q,\mathcal{P})$ for every $p\geq 1$.
\end{lem}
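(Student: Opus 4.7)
The plan is to reduce the claim to the case $p = 1$: Theorem \ref{t1} already yields $\lambda^{(p)}(Q,\mathcal{P}) = \pi(Q,\mathcal{P})$ for every $p > 1$ without requiring multiplicativity, and inequality (\ref{pp}) gives $\lambda^{(1)}(Q,\mathcal{P}) \geq \pi(Q,\mathcal{P})$. Therefore the entire content of the lemma lies in proving the reverse bound $\lambda^{(1)}(Q,\mathcal{P}) \leq \pi(Q,\mathcal{P})$, and this is where the multiplicativity hypothesis enters crucially.

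To achieve this, I would fix $n \geq s$, choose $H \in \mathcal{P}_n$ with $\lambda^{(1)}(Q,H) = \lambda^{(1)}(Q,\mathcal{P}_n)$, and let $\mathbf{x} \geq 0$ be a principal $Q$-eigenvector with $\|\mathbf{x}\|_{1} = 1$ and $P_{Q,H}(\mathbf{x}) = \lambda^{(1)}(Q,H)$. After restricting to the support of $\mathbf{x}$ (allowed since $\mathcal{P}$ is hereditary), I approximate $\mathbf{x}$ by a rational vector $\mathbf{k}/N = (k_1/N, \ldots, k_n/N)$, where the $k_i$ are positive integers summing to $N$ and $k_i/N \to x_i$ as $N \to \infty$. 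The blow-up $H(k_1, \ldots, k_n)$ is then an $N$-vertex $r$-graph lying in $\mathcal{P}_N$ by multiplicativity.

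The key step is the counting inequality
\[
\mathcal{N}(Q, H(k_1,\ldots,k_n)) \,\geq\, \sum_{\{i_1,\ldots,i_s\} \in \binom{[n]}{s}} \mathcal{N}(Q,H[\{i_1,\ldots,i_s\}])\, k_{i_1}\cdots k_{i_s} \,=\, \frac{N^s}{s!}\, P_{Q,H}(\mathbf{k}/N),
\]
obtained by observing that for each $s$-subset $S \subseteq V(H)$, any transversal picking one vertex from $V_i$ for every $i \in S$ induces a subgraph of $H(\mathbf{k})$ isomorphic to $H[S]$ and thus contributes $\mathcal{N}(Q,H[S])$ copies of $Q$; summing over the $\prod_{i \in S} k_i$ transversals and then over $S$ counts only the rainbow copies and hence bounds $\mathcal{N}(Q, H(\mathbf{k}))$ from below. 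Combining with $H(\mathbf{k}) \in \mathcal{P}_N$ gives
\[
\frac{ex(Q,\mathcal{P}_N)}{\binom{N}{s}} \,\geq\, \frac{\mathcal{N}(Q, H(\mathbf{k}))}{\binom{N}{s}} \,\geq\, \frac{N^s}{(N)_s}\, P_{Q,H}(\mathbf{k}/N).
\]
Letting $N \to \infty$ and using continuity of $P_{Q,H}$ in finitely many variables produces $\pi(Q,\mathcal{P}) \geq P_{Q,H}(\mathbf{x}) = \lambda^{(1)}(Q,\mathcal{P}_n)$; then taking $n \to \infty$ and invoking Theorem \ref{exi} yields $\pi(Q,\mathcal{P}) \geq \lambda^{(1)}(Q,\mathcal{P})$, as required.

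The main obstacle is the counting inequality itself. The subtle point is that copies of $Q$ in the blow-up may in general use two or more vertices from a single block (precisely when $Q$ has non-adjacent vertices whose images can be placed inside one independent blob), so equality need not hold and one cannot recover $\mathcal{N}(Q, H(\mathbf{k}))$ exactly from the Lagrangian. Fortunately such extra copies only inflate $\mathcal{N}(Q, H(\mathbf{k}))$, so the one-sided bound coming from rainbow transversals is precisely what the limit argument needs.
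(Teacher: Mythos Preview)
Your proposal is correct and follows essentially the same approach as the paper: both reduce to the $p=1$ case via Theorem \ref{t1} and inequality (\ref{pp}), approximate the principal eigenvector by a rational vector, pass to the blow-up (which lies in $\mathcal{P}$ by multiplicativity), and use the one-sided counting inequality from rainbow transversals to bound $P_{Q,H}$ by the $Q$-density. The only cosmetic difference is that the paper fixes a rational $\mathbf{x}=(k_1/k,\ldots,k_n/k)$ and then scales the blow-up by a factor $t\to\infty$, whereas you take a sequence of rational approximations with denominator $N\to\infty$; these are equivalent ways of organizing the same limit.
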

\begin{proof}
By Theorem \ref{t1}, we have $\lambda^{(p)}(Q,\mathcal{P})=\pi(Q,\mathcal{P})$ for all $p>1$. Inequality (\ref{pp}) gives $\lambda^{(1)}(Q,\mathcal{P})\geq\pi(Q,\mathcal{P})$. It remains to show the reverse inequality $\lambda^{(1)}(Q,\mathcal{P})\leq\pi(Q,\mathcal{P})$.

Consider $H\in \mathcal{P}_{n}$ with
$\lambda^{(1)}(Q,H)=\lambda^{(1)}(Q,\mathcal{P}_{n})$,
and let $\mathbf{x}=(x_1,\ldots,x_n)$ be a principal $Q$-eigenvector corresponding to $\lambda^{(1)}(Q,H)$ (with $\|\mathbf{x}\|_{1}=1$).  We claim that
\begin{equation}\label{1f}
\lambda^{(1)}(Q,H)=P_{Q,H}(\mathbf{x})\leq \pi(Q,\mathcal{P}).
\end{equation}
Since  $P_{Q,H}(\mathbf{x})$ is continuous in each variable, it  suffices to prove inequality (\ref{1f})  for positive rational numbers $x_{1},\ldots,x_{n}$. Thus, we can assume that
$$x_{1}=k_{1}/k,\ldots,x_{n}=k_{n}/k,$$
where $k,k_{1},\ldots,k_{n}$ are positive integers and $k=k_{1}+\cdots+k_{n}$. Consequently, inequality (\ref{1f}) is equivalent to
\begin{equation}\label{1f1f}
\frac{P_{Q,H}((k_{1},\ldots,k_{n}))}{k^{s}}\leq \pi(Q,\mathcal{P}).
\end{equation}

Let $H(k_{1},\ldots,k_{n})$ be the blow-up of $H$ with blocks $V_{1},\ldots,V_{n}$, and let $Q$ be an $s$-vertex $r$-graph.  For  any $i_{1},\ldots,i_{s}\in V(H)$, the
subgraph
$H(k_{1},\ldots,k_{n})[V_{i_{1}}\cup\ldots\cup V_{i_{s}}]$ contains at least  $k_{i_1}\times\cdots\times k_{i_s}$ copies of $H[\{i_{1},\ldots,i_{s}\}]$. It follows that
$$\mathcal{N}(Q,H(k_{1},\ldots,k_{n})[V_{i_{1}}\cup\ldots\cup V_{i_{s}}])\geq k_{i_{1}}\cdots k_{i_{s}}\mathcal{N}(Q,H[\{i_{1},\ldots,i_{s}\}]).$$
This implies that
\begin{align*}
\begin{split}
P_{Q,H}((k_{1},\ldots,k_{n}))&=s!\sum_{\{i_{1},\ldots,i_{s}\}\in E(Q,H)}\mathcal{N}(Q,H[\{i_{1},\ldots,i_{s}\}])k_{i_{1}}\cdots k_{i_{s}}\\
&\leq s!\sum_{\{i_{1},\ldots,i_{s}\}\in E(Q,H)}\mathcal{N}(Q,H(k_{1},\ldots,k_{n})[V_{i_{1}}\cup\ldots\cup V_{i_{s}}])\\
&\leq P_{Q,H(k_{1},\ldots,k_{n})}((1,\ldots,1))=s!\mathcal{N}(Q,H(k_{1},\ldots,k_{n})).
\end{split}
\end{align*}
Since $H(k_{1},\ldots,k_{n})\in\mathcal{P}$ (as $\mathcal{P}$ is multiplicative) and $\nu(H(k_{1},\ldots,k_{n}))=k$, we obtain
$$\frac{s!\mathcal{N}(Q,H(k_{1},\ldots,k_{n}))}{k^{s}}\leq \frac{ex(Q,\mathcal{P}_{k})}{\binom{k}{s}}
\leq \pi(Q,\mathcal{P})+o(1),$$
where the term $o(1)$ tends to $0$ as $k\to \infty$.

Similarly, for every positive integer $t$, we have
\begin{align*}
\begin{split}
\frac{P_{Q,H}((k_{1},\ldots,k_{n}))}{k^{s}}
&=\frac{P_{Q,H}((t k_{1},\ldots,t k_{n}))}{(t k)^{s}}\\
&\leq \frac{s!\mathcal{N}(Q,H(tk_{1},\ldots,tk_{n}))}{(t k)^{s}}\\
&\leq \pi(Q,\mathcal{P})+o(1).
\end{split}
\end{align*}
Taking  $t\to \infty$, we establish inequality (\ref{1f1f}), hence inequality (\ref{1f}) holds. Therefore,
 $$\lambda^{(1)}(Q,\mathcal{P})=\lim\limits_{n\to \infty}\lambda^{(1)}(Q,\mathcal{P}_{n})\leq\pi(Q,\mathcal{P}),$$
  completing the proof.
\end{proof}

Notably, the complete graph $K_{l+1}$ is $2$-covering, which implies that $\overline{K_{l+1}}$ is  $K_{2}$-flat.
A classical theorem of Tur\'an \cite[p. $294$]{B1978} establishes that for any $K_{l+1}$-free graph $G$ on $n$ vertices, the number of edges satisfies $e(G)\leq (1-\frac{1}{l})\frac{n^{2}}{2}$.
Wilf \cite{W1986} later provided a spectral extension of Tur\'an's theorem, demonstrating that if $G$ is an $n$-vertex $K_{l+1}$-free graph, then its largest eigenvalue $\lambda(G)$ satisfies
$\lambda(G)\leq(1-\frac{1}{l})n$. In 2002, Nikiforov \cite{N2002} further extended this result by proving that for any  $K_{l+1}$-free graph $G$ with $m$ edges, 
$\lambda(G)\leq(1-\frac{1}{l})^{1/2}(2m)^{1/2}$.  In the following, we  generalize these bounds to  families of $r$-graphs with $Q$-flat properties.


 \begin{thm}\label{t8}
If $Q$ is an $s$-vertex $r$-graph and $\mathcal{P}$ is a $Q$-flat property of $r$-graphs, then for any $H\in \mathcal{P}_{n}$,
\begin{equation*}
\mathcal{N} (Q,H)\leq\pi(Q,\mathcal{P})n^{s}/s!,
\end{equation*}
and for every $p\geq1$,
\begin{equation*}
\lambda^{(p)}(Q,H)\leq\pi(Q,\mathcal{P})n^{s-s/p}.
\end{equation*}
\end{thm}
\begin{proof}
Let $\mathbf{x}=(x_1,\ldots,x_n)$ be a $Q$-principal eigenvector corresponding to $\lambda^{(p)}(Q,H)$ with $\|\mathbf{x}\|_{p}=1$. Then
$$\frac{\lambda^{(p)}(Q,H)}{n^{s-s/p}}= \frac{P_{Q,H}(\mathbf{x})}{n^{s-s/p}}
=P_{Q,H}((x_1/n^{1-1/p},\ldots,x_n/n^{1-1/p})).$$
By Power-Mean inequality, for any $p\geq1$,
$$\frac{x_1+\cdots+x_n}{n^{1-1/p}}\leq(x_1^p+\cdots+x_n^p)^{1/p}=1.$$
From  Theorem \ref{exi}, it follows that
$$\frac{\lambda^{(p)}(Q,H)}{n^{s-s/p}}\leq \lambda^{(1)}(Q,H)\leq \lambda^{(1)}(Q,\mathcal{P}_{n})\leq\lambda^{(1)}(Q,\mathcal{P})=\pi(Q,\mathcal{P}).$$
Since $\lambda^{(p)}(Q,H)\geq s!\mathcal{N}(Q,H)/n^{s/p}$, we conclude
$$\mathcal{N}(Q,H)\leq\pi(Q,\mathcal{P})n^{s}/s!,$$
completing the proof.
\end{proof}

\begin{lem}\label{ttt6}
 If $Q$ is an $s$-vertex $r$-graph and  $\mathcal{P}$ is a $Q$-flat property of $r$-graphs, then for any $p\geq1$ and $H\in \mathcal{P}$,
$$\lambda^{(p)}(Q,H)\leq\pi(Q,\mathcal{P})^{1/p}(s!\mathcal{N}(Q,H))^{1-1/p}.$$
\end{lem}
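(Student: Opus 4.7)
The plan is to combine the monotonicity identity recorded in Lemma \ref{incr} with the $Q$-flatness hypothesis and the $p=1$ part of Theorem \ref{exi}. Rewriting the target inequality as
\[
\Bigl(\frac{\lambda^{(p)}(Q,H)}{s!\mathcal{N}(Q,H)}\Bigr)^{p}\leq \frac{\pi(Q,\mathcal{P})}{s!\mathcal{N}(Q,H)},
\]
makes it clear that the left-hand side is precisely the function $f_{Q,H}(p)$ from Lemma \ref{incr}, so everything reduces to bounding $f_{Q,H}(1)$.

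First I would invoke Lemma \ref{incr}: since $f_{Q,H}$ is decreasing on $[1,\infty)$, for every $p\geq 1$ we have
\[
\Bigl(\frac{\lambda^{(p)}(Q,H)}{s!\mathcal{N}(Q,H)}\Bigr)^{p}=f_{Q,H}(p)\leq f_{Q,H}(1)=\frac{\lambda^{(1)}(Q,H)}{s!\mathcal{N}(Q,H)},
\]
which, after clearing denominators, gives the clean interpolation bound
\[
\bigl(\lambda^{(p)}(Q,H)\bigr)^{p}\leq \lambda^{(1)}(Q,H)\cdot\bigl(s!\mathcal{N}(Q,H)\bigr)^{p-1}.
\]

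Second, I would replace $\lambda^{(1)}(Q,H)$ by $\pi(Q,\mathcal{P})$. If $H\in\mathcal{P}_{n}$, the $p=1$ case of Theorem \ref{exi} yields $\lambda^{(1)}(Q,H)\leq\lambda^{(1)}(Q,\mathcal{P}_{n})\leq \lambda^{(1)}(Q,\mathcal{P})$, and the $Q$-flatness hypothesis identifies the right-hand side with $\pi(Q,\mathcal{P})$. Substituting and taking $p$-th roots then gives
\[
\lambda^{(p)}(Q,H)\leq \pi(Q,\mathcal{P})^{1/p}\bigl(s!\mathcal{N}(Q,H)\bigr)^{1-1/p},
\]
as required.

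No step poses a real obstacle: the content is essentially a two-line chain built from Lemma \ref{incr} and the $Q$-flat identity $\lambda^{(1)}(Q,\mathcal{P})=\pi(Q,\mathcal{P})$. The only point worth isolating is the observation that $f_{Q,H}$ interpolates continuously between $\lambda^{(p)}(Q,H)$ and $\lambda^{(1)}(Q,H)$, which is exactly the leverage needed to convert the global $Q$-flat bound into a per-hypergraph estimate scaled by $\mathcal{N}(Q,H)$ rather than $n^{s}$.
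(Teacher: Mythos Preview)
Your proof is correct and follows essentially the same route as the paper: apply Lemma \ref{incr} to get $\bigl(\lambda^{(p)}(Q,H)\bigr)^{p}\leq \lambda^{(1)}(Q,H)\bigl(s!\mathcal{N}(Q,H)\bigr)^{p-1}$, then use Theorem \ref{exi} and the $Q$-flatness hypothesis to bound $\lambda^{(1)}(Q,H)\leq\lambda^{(1)}(Q,\mathcal{P})=\pi(Q,\mathcal{P})$.
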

\begin{proof}
Lemma \ref{incr} implies that  $$\big(\lambda^{(p)}(Q,H)\big)^{p}\leq\lambda^{(1)}(Q,H)(s!\mathcal{N}(Q,H))^{p-1}.$$
Moreover, since $\mathcal{P}$ is  $Q$-flat,  Theorem \ref{exi} yields
$$\lambda^{(1)}(Q,H)\leq\lambda^{(1)}(Q,\mathcal{P})=\pi(Q,\mathcal{P}).$$
Combining these inequalities, we obtain 
$$\lambda^{(p)}(Q,H)\leq\pi(Q,\mathcal{P})^{1/p}(s!\mathcal{N}(Q,H))^{1-1/p},$$
completing the proof.
\end{proof}

\section{Spectral generalized   Tur\'an problems}
In this section, we study  spectral generalized   Tur\'an problems for a family of $\mathcal{F}$-free  $r$-graphs. We  assume that all members of $\mathcal{F}$ contain no isolated vertices.

The following spectral stability theorem indicates that if  the maximum $(p,Q)$-spectral radius over all $\mathcal{F}$-free $r$-graphs satisfies a specific growth condition, then the extremal hypergraphs must have a large minimum $Q$-degree.
\begin{thm}\label{cri}
Let $p>1$, $s\geq r\geq2$, and $0<\varepsilon<1$. Let $Q$ be an $s$-vertex $r$-graph, and let $\mathcal{F}$ be a family of $r$-graphs with  $\pi(Q,\overline{\mathcal{F}})>0$.
Let $\mathcal{G}_n$ be the collection  of all $n$-vertex $\mathcal{F}$-free $r$-graphs with minimum $Q$-degree at least $(1-\varepsilon)\pi(Q,\overline{\mathcal{F}})\binom{n}{s-1}$ and define $\lambda^{(p)}(Q,\mathcal{G}_{n})=\max\{\lambda^{(p)}(Q,G): G\in \mathcal{G}_{n}\}$. Suppose that there exists a sufficiently large $n_{0}\in \mathbb{N}$  such that for every $n\geq n_{0}$, we have
\begin{equation}\label{d12d}
\lambda^{(p)}(Q,\overline{\mathcal{F}}_{n}) \ge  \lambda^{(p)}(Q,\overline{\mathcal{F}}_{n-1}) + \pi(Q,\overline{\mathcal{F}})(s-s/p) (1-\sigma)n^{s-s/p-1},
\end{equation}
where $\sigma=\varepsilon \pi(Q,\overline{\mathcal{F}})/(5s!(s-1)).$
Then  for any $\mathcal{F}$-free $r$-graph $H$ on $n\geq n_{0}$ vertices, we have
$$\lambda^{(p)}(Q,H)\leq \lambda^{(p)}(Q,\mathcal{G}_{n}).$$
In addition, if the equality holds, then $H\in\mathcal{G}_{n}$.
\end{thm}

We require the following Lemma for the proof of Theorem \ref{cri}.

\begin{lem}\label{tt2}
Let $p>1$, $s\geq r\geq2$, and $0<\varepsilon<1$.
Let  $Q$ be an $s$-vertex $r$-graph, and let $\mathcal{P}$ be a hereditary  property of $r$-graphs with $\pi(Q,\mathcal{P})>0$. Let $H_{n}\in \mathcal{P}_{n}$ satisfy $\lambda^{(p)}(Q,H_{n})=\lambda^{(p)}(Q,\mathcal{P}_{n})$. Suppose $0\leq\varepsilon'<\varepsilon \pi(Q,\mathcal{P})/(s!(s-1))$, and let $\mathbf{x}$ be a principal $Q$-eigenvector corresponding to  $\lambda^{(p)}(Q,H_{n})$. If
$n$ is sufficiently large and
$(\mathbf{x}_{\textup{min}})^{p}\geq\frac{1-\varepsilon'}{n},$
then
$$\delta_{Q}(H_{n})\geq(1-\varepsilon)\pi(Q,\mathcal{P})\binom{n}{s-1}.$$
\end{lem}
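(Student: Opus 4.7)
The plan is to apply Lemma~\ref{t2} to $H_n$ and to derive a contradiction from assuming the conclusion fails. Writing $\pi := \pi(Q,\mathcal{P})$, $\delta := \delta_Q(H_n)$, and $\lambda := \lambda^{(p)}(Q,H_n)=\lambda^{(p)}(Q,\mathcal{P}_n)$, I would first use Theorems~\ref{exi} and~\ref{t1} to obtain the crude lower bound $\lambda \ge \pi (n)_s/n^{s/p}$. Combined with the hypothesis $(\mathbf{x}_{\textup{min}})^p\ge (1-\varepsilon')/n$, which yields $(\mathbf{x}_{\textup{min}})^{p(p-1)}\ge(1-\varepsilon')^{p-1}/n^{p-1}$, this produces a lower bound of order $\pi^p(n)_s^p(1-\varepsilon')^{p-1}/\bigl(n^{s+p-1}((s-1)!)^p\bigr)$ on the left-hand side of Lemma~\ref{t2}.

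For the right-hand side of Lemma~\ref{t2}, since the coefficient $s!\binom{n}{s-1}\delta^{p-1}-\delta^p$ of $(\mathbf{x}_{\textup{min}})^{p(s-1)}$ is nonnegative, I would apply $(\mathbf{x}_{\textup{min}})^{p(s-1)}\ge((1-\varepsilon')/n)^{s-1}$ and then Bernoulli's inequality $(1-\varepsilon')^{s-1}\ge 1-(s-1)\varepsilon'$ to upper-bound the RHS by
$$\frac{(s-1)\varepsilon'\,s!\binom{n}{s-1}\delta^{p-1}+\delta^{p}}{n^{s-1}}.$$
Assuming, for contradiction, that $\delta<(1-\varepsilon)\pi\binom{n}{s-1}$, substituting, combining with the LHS bound and simplifying via $\binom{n}{s-1}(s-1)!=(n)_{s-1}$ and $(n)_s=(n)_{s-1}(n-s+1)$, I expect the inequality to collapse (after dividing by $\pi^p$) to
$$\Big(\frac{n-s+1}{n}\Big)^p(1-\varepsilon')^{p-1}<(1-\varepsilon)^{p-1}\Big[\frac{(s-1)s!\,\varepsilon'}{\pi}+(1-\varepsilon)\Big].$$

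Now the hypothesis $\varepsilon'<\varepsilon\pi/(s!(s-1))$ enters in two crucial ways: directly it forces $(s-1)s!\varepsilon'/\pi<\varepsilon$, so the bracket on the right is strictly less than $1$; and since $\pi\le s!/|Aut(Q)|$ also gives $\varepsilon'<\varepsilon/(s-1)\le\varepsilon$, we have $(1-\varepsilon)/(1-\varepsilon')<1$. The displayed inequality therefore reduces to $((n-s+1)/n)^p<((1-\varepsilon)/(1-\varepsilon'))^{p-1}$, whose right-hand side is a constant strictly less than $1$ while the left-hand side tends to $1$ as $n\to\infty$, yielding the contradiction for sufficiently large $n$.

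The main obstacle I anticipate is purely one of bookkeeping: two distinct powers of $1-\varepsilon'$, namely $(1-\varepsilon')^{p-1}$ (from $(\mathbf{x}_{\textup{min}})^{p(p-1)}$) and $(1-\varepsilon')^{s-1}$ (from $(\mathbf{x}_{\textup{min}})^{p(s-1)}$), must be tracked separately, and the strict hypothesis $\varepsilon'<\varepsilon\pi/(s!(s-1))$ must be inserted in exactly the right place so that the bracket $(s-1)s!\varepsilon'/\pi+(1-\varepsilon)$ stays strictly below $1$. Any slackness in the constants destroys the asymptotic cancellation that produces the contradiction.
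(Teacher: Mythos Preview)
Your proposal is correct and follows essentially the same approach as the paper: both argue by contradiction, combine the lower bound $\lambda\ge\pi\,(n)_s/n^{s/p}$ from Theorems~\ref{exi} and~\ref{t1} with Lemma~\ref{t2}, apply $(\mathbf{x}_{\min})^{p(s-1)}\ge(1-(s-1)\varepsilon')/n^{s-1}$ on the right-hand side, and use the hypothesis $\varepsilon'<\varepsilon\pi/(s!(s-1))$ to force the bracket below $1$, obtaining the same final inequality $\bigl((n-s+1)/n\bigr)^p<\bigl((1-\varepsilon)/(1-\varepsilon')\bigr)^{p-1}$ (the paper writes it equivalently as $\bigl((1-\varepsilon')/(1-\varepsilon)\bigr)^{(p-1)/p}\le n/(n-s+1)$). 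The only cosmetic difference is that the paper first bounds $s!(s-1)\varepsilon'+(1-\varepsilon)\pi<\pi$ before inserting $\delta<(1-\varepsilon)\pi\binom{n}{s-1}$ into the remaining $\delta^{p-1}$, whereas you substitute for $\delta$ everywhere at once and then invoke the $\varepsilon'$ hypothesis.
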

\begin{proof}
 Set $\delta:=\delta_{Q}(H_{n})$ and $\lambda:=\lambda^{(p)}(Q,H_{n})$.
 Suppose for  contradiction that $\delta<(1-\varepsilon)\pi(Q,\mathcal{P})\binom{n}{s-1}$. By Theorem \ref{exi} and Lemma \ref{t2}, we obtain
 \begin{align*}
\begin{split}
(1-\varepsilon')^{p-1}\Big(\frac{\lambda^{(p)}(Q,\mathcal{P})}{(s-1)!}\Big)^{p}\cdot
\frac{((n)_{s})^{p}}{n^{s+p-1}}
& \leq\Big(\frac{\lambda (\mathbf{x}_{\textup{min}})^{p-1}}{(s-1)!}\Big)^{p}\\
&\leq \frac{s!\binom{n}{s-1}\delta^{p-1}}{n^{s-1}}-(s!\tbinom{n}{s-1}\delta^{p-1}-\delta^{p})(\mathbf{x}_{\textup{min}})^{p(s-1)}\\
&\leq  \frac{s!\binom{n}{s-1}\delta^{p-1}}{n^{s-1}}-\frac{1}{n^{s-1}}(s!\tbinom{n}{s-1}\delta^{p-1}-\delta^{p})(1-(s-1)\varepsilon')\\
&\leq \frac{s!(s-1)\varepsilon'\binom{n}{s-1}\delta^{p-1}}{n^{s-1}}+\frac{\delta^{p}}{n^{s-1}}\\
&\leq \frac{\binom{n}{s-1}\delta^{p-1}}{n^{s-1}}(s!(s-1)\varepsilon'+
(1-\varepsilon)\pi(Q,\mathcal{P}))\\
&\leq \frac{\pi(Q,\mathcal{P})\binom{n}{s-1}\delta^{p-1}}{n^{s-1}},
\end{split}
\end{align*}
where the last inequality follows from $\varepsilon'<\varepsilon \pi(Q,\mathcal{P})/(s!(s-1))$.
By Theorem \ref{t1} and the assumption $\delta<(1-\varepsilon)\pi(Q,\mathcal{P})\binom{n}{s-1}$, we further get
$$(1-\varepsilon')^{p-1}\big(\pi(Q,\mathcal{P})\tbinom{n-1}{s-1}\big)^{p}\leq(1-\varepsilon)^{p-1}\big(\pi(Q,\mathcal{P})\tbinom{n}{s-1}\big)^{p},$$
which implies that
$$1<\Big(\frac{1-\varepsilon'}{1-\varepsilon}\Big)^{\frac{p-1}{p}}\leq \frac{n}{n-s+1}.$$
This is a contradiction for sufficiently large $n$, completing the proof.
\end{proof}

\begin{proof}[{\bf Proof of Theorem \ref{cri}}]
Let $H_{n}$ be an $\mathcal{F}$-free $r$-graph on $n$ vertices that satisfies $\lambda^{(p)}(Q,H_{n})=\lambda^{(p)}(Q,\overline{\mathcal{F}}_{n})$ , and let $\mathbf{x}=(x_{1},\ldots,x_{n})$ be a principal  $Q$-eigenvector corresponding to $\lambda^{(p)}(Q,H_{n})$. In view of Lemma \ref{tt2}, it suffices to show that for $n\geq n_{0}$, $$(\mathbf{x}_{\textup{min}})^{p}\geq\frac{1-\varepsilon'}{n},$$
where $\varepsilon'=\varepsilon \pi(Q,\overline{\mathcal{F}})/(2s!(s-1))$.
Suppose for contradiction  that for some $n$,
$$(\mathbf{x}_{\textup{min}})^{p}<\frac{1-\varepsilon'}{n}.$$
Applying (\ref{e8}), Fact \ref{fact}, and Bernoulli's inequality, we obtain
\begin{align*}
\begin{split}
\frac{\lambda^{(p)}(Q,H_{n-1})}{\lambda^{(p)}(Q,H_{n})}
&\geq\frac{1-s(\mathbf{x}_{\textup{min}})^{p}}{(1-(\mathbf{x}_{\textup{min}})^{p})^{s/p}}\\
&\geq\Big(1-\frac{s(1-\varepsilon')}{n}\Big)\Big(1-\frac{1-\varepsilon'}{n}\Big)^{-s/p}\\
&\geq\Big(1-\frac{s(1-\varepsilon')}{n}\Big)\Big(1+\frac{s(1-\varepsilon')}{pn}\Big)\\
&= 1-\frac{(s-s/p)(1-\varepsilon')}{n}-\frac{s^{2}(1-\varepsilon')^{2}}{pn^{2}}.
\end{split}
\end{align*}
From Theorem \ref{t1}, it follows that $\lambda^{(p)}(Q,H_{n})=
(\pi(Q,\overline{\mathcal{F}})+o(1))n^{s-s/p}$, and hence
\begin{equation}\label{k1}
\lambda^{(p)}(Q,H_{n-1})\geq \lambda^{(p)}(Q,H_{n})-\pi(Q,\overline{\mathcal{F}})(s-s/p)(1-\varepsilon'/2)n^{s-s/p-1}.
\end{equation}

On the other hand, by (\ref{d12d}), we have
\begin{equation}\label{k2}
\lambda^{(p)}(Q,H_{n})\geq \lambda^{(p)}(Q,H_{n-1})+\pi(Q,\overline{\mathcal{F}})(s-s/p)(1-\sigma)n^{s-s/p-1}.
\end{equation}
Combining (\ref{k1}) and (\ref{k2}) yields
$$\pi(Q,\overline{\mathcal{F}})(s-s/p)(1-\sigma)n^{s-s/p-1}\leq \pi(Q,\overline{\mathcal{F}})(s-s/p)(1-\varepsilon'/2)n^{s-s/p-1},$$
which  contradicts $\sigma=\varepsilon \pi(Q,\overline{\mathcal{F}})/(5s!(s-1))$.
This completes the  proof of Theorem \ref{cri}.
\end{proof}


\subsection{Spectral Erd\H{o}s pentagon theorem} In $1984$, Erd\H{o}s
 conjectured that for every $n\geq5$, the balanced blow-up of $C_{5}$ contains the maximum number of copies of $C_{5}$ among all $n$-vertex triangle-free graphs. This conjecture was first resolved independently  by Grzesik \cite{G2012} and Hatami et al. \cite{HHK2013} for sufficiently large $n$. Later, Lidick\'y and Pfender \cite{LP2018} completed the proof by extending the result to all $n$.
\begin{lem}[\hspace{1sp}\cite{LP2018}]\label{BF}
For all $n$, the maximum number of copies of  $C_{5}$ in $K_{3}$-free graphs on $n$ vertices is $$\prod^{4}_{i=0}\Big\lfloor\frac{n+i}{5}\Big\rfloor.$$
Moreover, for $n\geq9$, the  only $K_{3}$-free graph on $n$ vertices maximizing the number of copies of  $C_{5}$ is the balanced blow-up of $C_{5}$.
\end{lem}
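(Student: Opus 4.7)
The plan is to first verify that the claimed count is attained by the balanced blow-up and then prove matching upper bounds via a symmetrization argument. The balanced blow-up $C_5(a_0,\ldots,a_4)$ with part sizes $a_i = \lfloor (n+i)/5 \rfloor$ (suitably reordered) is triangle-free, and every copy of $C_5$ in it is a transversal of the five parts, each transversal yielding exactly one $C_5$. So $\mathcal{N}(C_5, C_5(a_0,\ldots,a_4)) = \prod_{i=0}^{4} a_i$, matching the claimed lower bound.

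For the upper bound, I would apply a Zykov-type symmetrization to an extremal $K_3$-free graph $H$. Writing $d_{C_5,H}(v)$ for the number of copies of $C_5$ through $v$, for any two non-adjacent vertices $u,v$ with $d_{C_5,H}(u) \geq d_{C_5,H}(v)$ one may replace the neighborhood of $v$ by a copy of $N_H(u)$. Triangle-freeness is preserved (since $u \not\sim v$ and $N_H(u)$ is independent), and $\mathcal{N}(C_5, H)$ does not decrease. Iterating identifies twin equivalence classes, so we may assume $H = H_0(k_1,\ldots,k_m)$ is a blow-up of some triangle-free graph $H_0$. The upper bound then reduces to a Lagrangian problem: maximize
\[
\sum_{(i_1,\ldots,i_5) \in \mathrm{Hom}(C_5, H_0)} k_{i_1}\cdots k_{i_5}
\]
subject to $\sum_i k_i = n$, over all triangle-free $H_0$.

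The hard part is showing that this maximum is attained only when $H_0 = C_5$ itself and the $k_i$ are equalized. This is the deep core of the theorem and historically required flag algebra computations (Grzesik; Hatami--Hladk\'y--Kr\'al'--Norin--Razborov): conceptually, one must verify that for every triangle-free $H_0 \neq C_5$ on at most five vertices the optimum weighted $C_5$-count is strictly smaller, while graphs on more vertices either contain a twin pair (handled by further symmetrization) or contain $C_5$ as an induced subgraph with strictly sub-optimal local structure. This rules out every possibility outside the $C_5$-blow-up family.

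Once $H_0 = C_5$ is established, the integer optimization $\max \prod_{i=0}^{4} k_i$ under $\sum k_i = n$ is elementary: the product is strictly maximized when the $k_i$ differ pairwise by at most $1$, giving $\prod_{i=0}^{4} \lfloor (n+i)/5 \rfloor$. For the uniqueness claim when $n \geq 9$, I would combine the strict improvement whenever a symmetrization step applies non-trivially with the strict log-concavity of the product to pin down the unique balanced five-part structure. The small cases $5 \leq n \leq 8$ must be handled separately by direct inspection, as carried out in \cite{LP2018}, since the balanced blow-up may be isomorphic to other extremal structures for very small $n$.
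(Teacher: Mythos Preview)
The paper does not prove this lemma; it is imported from Lidick\'y--Pfender \cite{LP2018} (building on \cite{G2012,HHK2013}) and used as a black box, so there is no in-paper argument to compare your sketch against.

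That said, your symmetrization step contains a genuine gap. When $u\not\sim v$ and you replace $N_H(v)$ by $N_H(u)$, it is \emph{not} true in general that $\mathcal{N}(C_5,\cdot)$ does not decrease. Write $a$, $b$, $c$ for the numbers of copies of $C_5$ in $H$ through $u$ only, through $v$ only, and through both $u$ and $v$, respectively (the last type is nonempty in general: two non-adjacent vertices can sit at distance~$2$ on a $5$-cycle). After the switch, $u$ and $v$ become non-adjacent twins in a triangle-free graph, and one checks that no $C_5$ can then contain both; moreover every $C_5$ through the new $v$ corresponds bijectively to a $C_5$ through $u$ avoiding $v$ in $H$. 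Hence the net change in the total count is $a-b-c$, whereas your hypothesis $d_{C_5,H}(u)\ge d_{C_5,H}(v)$ only says $a+c\ge b+c$, i.e.\ $a\ge b$. The term $-c$ is uncontrolled, so the operation can strictly decrease $\mathcal{N}(C_5,H)$. This is exactly why the known proofs do not run a Zykov-style reduction to blow-ups but instead establish the density inequality directly via flag algebras (and, in \cite{LP2018}, a computer-assisted stability analysis to cover all $n$). You correctly flag that the ``hard part'' needs such machinery, but your proposed reduction to that hard part does not go through as written; the lower-bound computation and the final integer optimization once $H_0=C_5$ are fine.
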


\begin{lem}[\hspace{1sp}\cite{CL2024}]\label{c5d}
There exist $\varepsilon>0$ and $N_{0}$ such that the following holds for all $n\geq N_{0}$. If $G$ is an $n$-vertex $K_{3}$-free graph with  $\delta_{C_{5}}(G)\geq (1/5^{4}-\varepsilon)n^{4}$, then $G$
is $C_{5}$-colorable.
\end{lem}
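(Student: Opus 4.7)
The plan is to reduce the lemma to a \emph{counting stability} form of the Erd\H{o}s pentagon theorem and then bootstrap the resulting approximate structure into an exact $C_{5}$-coloring using the minimum $C_{5}$-degree hypothesis.

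First I would convert the degree condition into a global count: summing $d_{C_{5},G}(v)$ over all vertices gives
$$5\,\mathcal{N}(C_{5},G) \;=\; \sum_{v\in V(G)} d_{C_{5},G}(v) \;\geq\; n\bigl(1/5^{4}-\varepsilon\bigr)n^{4},$$
so $\mathcal{N}(C_{5},G)\geq (1/5^{5}-\varepsilon/5)n^{5}$. By Lemma \ref{BF}, the maximum number of copies of $C_{5}$ in an $n$-vertex triangle-free graph is $(1+o(1))(n/5)^{5}$, so $G$ falls within a factor $1-O(\varepsilon)$ of being $C_{5}$-extremal. I would then invoke a stability version of the Erd\H{o}s pentagon theorem (in the spirit of Hatami--Hladk\'y--Kr\'a\v{l}--Norine--Razborov and Grzesik, proved via flag algebras or Szemer\'edi regularity): for every $\delta>0$ there exists $\eta>0$ such that any $n$-vertex triangle-free graph with at least $(1-\eta)(n/5)^{5}$ copies of $C_{5}$ admits a partition $V(G)=V_{1}\cup\cdots\cup V_{5}$ with $\bigl||V_{i}|-n/5\bigr|=o(n)$ and at most $\delta n^{2}$ \emph{bad} edges, i.e.\ edges inside some $V_{i}$ or between non-consecutive classes (indices mod $5$). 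Taking $\varepsilon$ sufficiently small forces $\eta$ sufficiently small to produce such a near-blow-up partition.

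The main step, which I expect to be the main obstacle, is to upgrade this approximate partition into an exact $C_{5}$-coloring using the \emph{local} minimum $C_{5}$-degree hypothesis (stability only gives global structure). I would argue vertex-by-vertex: suppose $v\in V_{i}$ is incident to a bad edge $vw$. Triangle-freeness combined with the approximate blow-up pattern forces $v$ and $w$ to have few common neighbors in any class consistent with completing a $C_{5}$ through $v$, so each bad edge at $v$ eliminates $\Omega(n^{3})$ of the cycles counted by $d_{C_{5},G}(v)$. Hence a vertex with more than $o(n)$ bad edges would have $C_{5}$-degree below $(1/5^{4}-\varepsilon)n^{4}$ once $\varepsilon$ is chosen small relative to the implicit constants, contradicting the hypothesis. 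Thus only $o(n)$ vertices have any bad edges at all, and these exceptional vertices can be reassigned to the class that minimizes their remaining bad edges, iteratively cleaning the partition without violating triangle-freeness.

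After cleaning, every edge of $G$ runs between consecutive classes $V_{i}$ and $V_{i+1}$, so the map $V_{i}\mapsto i$ is a homomorphism $G\to C_{5}$, certifying that $G$ is $C_{5}$-colorable. The delicate points are (i) choosing $\varepsilon$ small enough relative to the stability parameter $\delta$ so that the ``one bad edge kills $\Omega(n^{3})$ copies of $C_{5}$'' estimate dominates the error terms in the count $\delta_{C_{5}}(G)\ge (1/5^{4}-\varepsilon)n^{4}$, and (ii) carrying out the iterative reassignment of the $o(n)$ exceptional vertices in such a way that each move monotonically reduces the number of bad edges and cannot introduce new ones, which is standard but requires attention to the small sublinear error terms.
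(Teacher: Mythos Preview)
The paper does not prove this lemma at all: it is quoted verbatim from Chen--Liu \cite{CL2024} and used as a black box in the proof of the spectral Erd\H{o}s pentagon theorem. So there is no ``paper's own proof'' to compare against; the authors simply import the result. The title of \cite{CL2024} suggests their argument goes through a general \emph{vertex-extendability} framework rather than the ad hoc stability-plus-cleaning route you sketch, so if you want an exact comparison you would need to consult that reference directly.

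That said, your outline has a genuine gap in the cleaning step. From ``no vertex has more than $o(n)$ bad edges'' you jump to ``only $o(n)$ vertices have any bad edges at all''; these are different statements and the second does not follow from the first. The claim that a single bad edge at $v$ \emph{eliminates} $\Omega(n^{3})$ copies of $C_{5}$ through $v$ is also not right as stated: bad edges do not destroy pentagons, and pentagons through $v$ may themselves use bad edges elsewhere. What one actually needs is an upper bound on $d_{C_{5},G}(v)$ in terms of the good-degree profile of $v$ into the neighbouring classes (using triangle-freeness to control overlaps), together with a global bound on the number of $C_{5}$'s containing at least one bad edge coming from the $\delta n^{2}$ stability estimate. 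Only after that can one conclude that every vertex has nearly full degree into its two consecutive classes and essentially no edges elsewhere, at which point a single reassignment (not an iteration) typically suffices. Your plan is salvageable, but the quantitative bookkeeping in that paragraph needs to be redone.
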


\begin{rmk}
 We remark that the minimum $Q$-degree in \cite{CL2024} differs from our by a constant  factor of $|Aut(Q)|$. Additionally, observe that $|Aut(C_{5})|=10$.
 \end{rmk}

\begin{lem}[\hspace{1sp}\cite{ZLF2024}]\label{t9}
Let $l\geq r\geq2$. Then 
$e(T^{r}_{l}(n))=\frac{(l)_{r}}{r!l^{r}}n^{r}+O(n^{r-2}).$ 
\end{lem}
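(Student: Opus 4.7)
The plan is to compute $e(T^{r}_{l}(n))$ directly via elementary symmetric polynomials. Writing $n = lq + s$ with $0 \le s < l$, the partition defining $T^{r}_{l}(n)$ has $s$ parts of size $q+1$ and $l-s$ parts of size $q$. Since each edge picks exactly one vertex from each of $r$ distinct parts,
$$e(T^{r}_{l}(n)) = \sum_{\{i_{1},\ldots,i_{r}\}\in\binom{[l]}{r}} n_{i_{1}}\cdots n_{i_{r}} = e_{r}(n_{1},\ldots,n_{l}),$$
the $r$-th elementary symmetric polynomial in the part sizes $n_{1},\ldots,n_{l}$.

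To extract the leading term I would shift coordinates: set $\epsilon_{i} := n_{i} - n/l$, so that $\epsilon_{i} = 1 - s/l$ for the $s$ larger parts and $\epsilon_{i} = -s/l$ for the remaining $l-s$ parts. Then $|\epsilon_{i}| < 1$ uniformly and, crucially, $\sum_{i}\epsilon_{i} = 0$. Expanding $\prod_{i\in S}(n/l + \epsilon_{i})$ and collecting the terms according to the size $k$ of the subset $T\subseteq S$ contributing factors of $\epsilon$, and using that each fixed $T$ of size $k$ lies in exactly $\binom{l-k}{r-k}$ many $r$-subsets of $[l]$, one obtains
$$e_{r}(n_{1},\ldots,n_{l}) = \sum_{k=0}^{r} \binom{l-k}{r-k}\Big(\frac{n}{l}\Big)^{r-k} e_{k}(\epsilon_{1},\ldots,\epsilon_{l}).$$

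The $k=0$ term equals $\binom{l}{r}(n/l)^{r} = \frac{(l)_{r}}{r!\,l^{r}}n^{r}$, producing the claimed main term; the $k=1$ term vanishes because $e_{1}(\epsilon) = \sum_{i}\epsilon_{i} = 0$. For $2\le k\le r$, the uniform bound $|\epsilon_{i}|<1$ yields $|e_{k}(\epsilon)| \le \binom{l}{k}$, a constant depending only on $l$, so each such contribution is $O(n^{r-k}) = O(n^{r-2})$. Summing these bounds gives $e(T^{r}_{l}(n)) = \frac{(l)_{r}}{r!\,l^{r}}n^{r} + O(n^{r-2})$, as claimed. The only mildly delicate point, and the one worth highlighting, is the vanishing of the linear-in-$\epsilon$ term, which is precisely what prevents the error from being $O(n^{r-1})$; beyond this observation, the argument is a routine symmetric-function expansion and counting.
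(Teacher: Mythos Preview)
Your argument is correct. The paper itself does not supply a proof of this lemma; it is quoted from \cite{ZLF2024} and used as a black box, so there is no in-paper proof to compare against. Your computation via the shift $\epsilon_i = n_i - n/l$ and the identity
\[
e_r(n_1,\ldots,n_l) = \sum_{k=0}^{r}\binom{l-k}{r-k}\Big(\frac{n}{l}\Big)^{r-k} e_k(\epsilon_1,\ldots,\epsilon_l)
\]
is the standard way to establish this estimate, and you have correctly identified the key point: the vanishing of $e_1(\epsilon)=\sum_i \epsilon_i = 0$ is exactly what kills the $n^{r-1}$ term and forces the error down to $O(n^{r-2})$. The remaining bounds $|e_k(\epsilon)|\le \binom{l}{k}$ for $k\ge 2$ are immediate from $|\epsilon_i|<1$.
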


\begin{lem}[\hspace{1sp}\cite{KNY2014}]\label{kpt}
Let $l\geq r\geq2$, and let $G$ be an $l$-partite $r$-graph of order $n$.
For every $p>1$,
$$\lambda^{(p)}(G)\leq \lambda^{(p)}(T^{r}_{l}(n)),$$
with equality if and only if $G=T^{r}_{l}(n)$.
\end{lem}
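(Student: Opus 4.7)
The plan is to prove the inequality in three stages: reduce to complete $l$-partite $r$-graphs, show that a positive principal $p$-eigenvector is constant on each part, and finally show via a shift argument that the balanced composition is the unique maximizer corresponding to $T^r_l(n)$.

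For the first two reductions, note that any principal $p$-eigenvector is nonnegative and $P_G(\mathbf{x})$ is monotone in $E(G)$ for fixed $\mathbf{x}\ge 0$, so adding edges while preserving $l$-partiteness weakly increases $\lambda^{(p)}$. Hence we may assume $G=K^r_l(V_1,\dots,V_l)$ with $|V_i|=n_i$ and $\sum_i n_i=n$. At a vertex $v\in V_i$, the eigenequation
\begin{equation*}
\lambda^{(p)}(G)\,x_v^{p-1}=(r-1)!\sum_{I\in\binom{[l]\setminus\{i\}}{r-1}}\prod_{j\in I}\sum_{u\in V_j}x_u
\end{equation*}
has a right-hand side depending only on $i$, so any positive principal eigenvector is constant on each part; write $y_i$ for its common value on $V_i$ and set $u_i:=n_iy_i$. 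The normalization becomes $\sum_i n_i y_i^p=1$, and
\begin{equation*}
F(n_1,\dots,n_l):=\lambda^{(p)}(K^r_l(n_1,\dots,n_l))=r!\max\Bigl\{e_r(u_1,\dots,u_l)\ :\ y_i\ge0,\ \sum_i n_iy_i^p=1\Bigr\},
\end{equation*}
where $e_r$ is the $r$-th elementary symmetric polynomial.

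It remains to show that $F$, as a function of compositions of $n$ into $l$ positive integers, is maximized uniquely at the balanced composition. The key step is a \emph{shift lemma}: if $n_j\ge n_k+2$, then replacing $(n_j,n_k)$ by $(n_j-1,n_k+1)$ strictly increases $F$. To establish it, I would take the principal eigenvector $\mathbf{y}$ of $K:=K^r_l(n_1,\dots,n_l)$ and construct a test vector for $K':=K^r_l(n_1,\dots,n_j-1,\dots,n_k+1,\dots,n_l)$ that keeps $y_i$ on the unchanged parts and assigns $(y_j',y_k')$ on the two modified parts subject to $(n_j-1)(y_j')^p+(n_k+1)(y_k')^p=n_jy_j^p+n_ky_k^p$, then optimize the remaining one-parameter family by taking $(y_j',y_k')$ to be the principal $p$-eigenvector of the projected two-variable problem. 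Expanding
\begin{equation*}
e_r(a,b,\tilde u)=ab\,e_{r-2}(\tilde u)+(a+b)\,e_{r-1}(\tilde u)+e_r(\tilde u)
\end{equation*}
with $\tilde u=(u_i)_{i\ne j,k}$, and using the arithmetic identity $(n_j-1)(n_k+1)-n_jn_k=n_j-n_k-1\ge 1$, one expects a strict increase for $p>1$.

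The main obstacle lies in the shift lemma when $2<r<l$, where $F$ admits no closed form. In the boundary cases a direct calculation gives $F=2^{1-2/p}(n_1n_2)^{1-1/p}$ for $r=l=2$ and $F=l!\,(\prod_i n_i)^{1-1/p}/l^{l/p}$ for $r=l$, both visibly Schur-concave, and the shift reduces to the elementary fact $(n_j-1)(n_k+1)>n_jn_k$. In the general case one must combine the above expansion of $e_r$ with power-mean inequalities and the Lagrange conditions for the two-variable projected problem; the naive constant test vector $y_j'=y_k'$ can actually fail to beat $\lambda^{(p)}(K)$ when $p$ is close to $1$, so the projected principal eigenvector is essential. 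The equality case is then read off: strict inequality in the underlying power-mean inequality forces $y_j=y_k$, which together with the symmetry propagated back through the eigenequation yields $G=T^r_l(n)$.
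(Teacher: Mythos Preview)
The paper does not prove this lemma; it is quoted from Kang--Nikiforov--Yuan \cite{KNY2014} and used as a black box, so there is no in-paper argument to compare your outline against.

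As a self-contained proof, your proposal has a genuine gap precisely where you flag it: the shift lemma for $2<r<l$. The first two reductions (pass to the complete $l$-partite $r$-graph; constancy of a positive principal eigenvector on each part) are standard and correct, and the closed-form cases $r=2$ and $r=l$ are fine. But in the general case you only say that ``one expects a strict increase'' after moving a vertex from a large part to a small one, and you note that the naive equal-value test vector can fail and that the ``projected principal eigenvector is essential.'' You never verify that this projected eigenvector actually yields a strictly larger value. Concretely, the gain $(n_j-1)(n_k+1)-n_jn_k\ge 1$ appears only in the $ab\,e_{r-2}(\tilde u)$ term of your expansion, while the $(a+b)\,e_{r-1}(\tilde u)$ term and the $p$-norm constraint both change nontrivially when you re-optimize over $(y_j',y_k')$; nothing in the sketch controls the sign of the net effect. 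This is the crux of the original KNY argument, and it is not supplied here.

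A secondary looseness concerns the equality characterization. Your first reduction only gives a weak inequality when passing to the complete $l$-partite graph; to conclude that equality forces $G=T^r_l(n)$ you also need that adding a missing cross-edge \emph{strictly} increases $\lambda^{(p)}$, which in turn requires positivity of the principal eigenvector on the relevant vertices. That is true for $p>1$ by Perron--Frobenius-type reasoning, but it should be stated rather than absorbed into ``weakly increases.''
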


For any  $r$-graph $Q$ on $s$ vertices and any $r$-graph $H$, we define  $D(Q,H)$ as the $s$-graph derived from $H$ with vertex set $V(D(Q,H))=V(H)$  and edge set $$E(D(Q,H))=\{\{v_{1},\ldots,v_{s}\}:H[v_{1},\ldots,v_{s}]\supseteq Q\}.$$
Note that if $\mathcal{N}(Q,H[v_{1},\ldots,v_{s}])=1$ for any $\{v_{1},\ldots,v_{s}\}\in E(D(Q,H))$, then 
\begin{equation}\label{t10}
\mathcal{N}(Q,H)=e(D(Q,H))~~\mbox{and}~~\lambda^{(p)}(Q,H)=\lambda^{(p)}(D(Q,H)).
\end{equation}

Recently, Liu \cite[Theorem $1.5$]{Liu2025} established a  general theorem that extends the result of Keevash-Lenz-Mubayi  and applied it to obtain a spectral Erd\H{o}s pentagon theorem. We extend  Liu's result via a different  approach. For any $p\geq1$, an $r$-graph $Q$ and a family $\mathcal{G}_{n}$ of $r$-graphs on $n$ vertices, let $\lambda^{(p)}(Q,\mathcal{G}_{n})$ (resp. $\lambda^{(p)}(\mathcal{G}_{n})$)  denote the maximum $(p,Q)$-spectral radius (resp. $p$-spectral radius) among all $r$-graphs in $\mathcal{G}_{n}$.

\begin{thm}
Let $p\geq 1$, and let $\mathcal{L}_{n}$ be the balanced blow-up of $C_{5}$ on $n$ vertices. Then, for all  sufficiently large $n$ and any $n$-vertex $K_{3}$-free graph $G$, we have $\lambda^{(p)}(C_{5},G)\leq
\lambda^{(p)}(C_{5},\mathcal{L}_{n})$. The equality holds  if and only if  $G=\mathcal{L}_{n}$ for $p>1$, and if  $ G\supseteq C_{5}$ for $p=1$.
\end{thm}
\begin{proof}
For any $C_{5}$-colorable graph $H$ with a homomorphism $\phi$ from $V(H)$ to $V(C_{5})$, denote $V(C_{5})$ as $\{1,2,3,4,5\}$. For each $i\in[5]$, define  $V_{i}=\{v\in V(H):  \phi(v)=i\}$ (some $V_{i}$ may be empty). This defines a  natural partition of $V(H)$. 

We claim that for any five vertices $v_{1},\ldots,v_{5}$ in $V(H)$, if $H[v_{1},\ldots,v_{5}]$ contains a copy of $C_{5}$ (in fact, $H[v_{1},\ldots,v_{5}]\cong C_{5}$), then these  five vertices must belong to five distinct parts in the partition.  

Suppose $v_{1}v_{2}v_{3}v_{4}v_{5}v_{1}$ forms a copy of $C_{5}$
in $H[v_{1},\ldots,v_{5}]$, with $\{v_{i}v_{i+1}\}\in E(H)$ for $i\in [5]$ (indices modulo $5$). Assume for contradiction that the claim fails.  By  symmetry, we may assume that $v_{1}$ and $v_{3}$ belong
to the same part, i.e., $\phi(v_{1})=\phi(v_{3})$. Since $H$ is 
$C_{5}$-colorable and $\{v_{5}v_{1}\} \in E(H)$, it follows that 
$$\{\{\phi(v_{5})\phi(v_{3})\},\{\phi(v_{3})\phi(v_{4})\},
\{\phi(v_{4})\phi(v_{5})\}\}\subseteq E(C_{5}).$$ 
which contradicts the fact that $C_{5}$ is $K_{3}$-free. Therefore, the claim holds. 

The above claim implies that $D(C_{5},H)$
is a $5$-partite $5$-graph. It follows that $D(C_{5},\mathcal{L}_{n})$ is isomorphic to $T_{5}^{5}(n)$, and hence $\mathcal{N}(C_{5},\mathcal{L}_{n})=e(T_{5}^{5}(n))$. By Lemmas \ref{BF} and \ref{t9}, we have
\begin{equation}\label{exe}
 ex(C_{5},(\overline{K_{3}})_{n})=\mathcal{N}(C_{5},\mathcal{L}_{n})=e(T_{5}^{5}(n))=\frac{n^{5}}{5^{5}}+O(n^{3}),
\end{equation}
and hence $\pi(C_{5},\overline{K_{3}})= 5!/5^{5}$.

Let $\mathrm{Col}(C_{5})_{n}$ be the set of all $C_{5}$-colorable graphs on $n$ vertices, and let $$\mathcal{R}_{n}:=\{D(C_{5},H): H\in \mathrm{Col}(C_{5})_{n}\}.$$
Then, by (\ref{t10}),
\begin{equation}\label{t11}
\lambda^{(p)}(C_{5},\mathrm{Col}(C_{5})_{n})=\lambda^{(p)}(\mathcal{R}_{n}).
\end{equation}
Lemma \ref{c5d} shows that there exist $\varepsilon>0$ and $N_{0}$ such that  for every $n$-vertex $K_{3}$-free graph $G$ with  $\delta_{C_{5}}(G)\geq(1/5^{4}-\varepsilon)n^{4}$ is contained in $\mathrm{Col}(C_{5})_{n}$.

By  (\ref{zhy}) and (\ref{exe}), we have
\begin{equation}\label{1k3}
\lambda^{(p)}(C_{5},(\overline{K_{3}})_{n})\geq 5!ex(C_{5},(\overline{K_{3}})_{n})/n^{5/p}\geq \pi(C_{5},\overline{K_{3}})n^{5-5/p}+O(n^{3-5/p}).
\end{equation}
Note that $K_{3}$ is a $2$-covering graph. Lemma \ref{flat} and Theorem \ref{t8} imply that
\begin{align}\label{2k3}
\begin{split}
\lambda^{(p)}(C_{5},(\overline{K_{3}})_{n-1})
&\leq \pi(C_{5},\overline{K_{3}})(n-1)^{5-5/p}\\
&=\pi(C_{5},\overline{K_{3}})n^{5-5/p}-
\pi(C_{5},\overline{K_{3}})(5-5/p)n^{4-5/p}+O(n^{3-5/p}).
\end{split}
\end{align}
Combining (\ref{1k3}) and (\ref{2k3}) yields
$$\lambda^{(p)}(C_{5},(\overline{K_{3}})_{n})-\lambda^{(p)}(C_{5},(\overline{K_{3}})_{n-1})
\geq \pi(C_{5},\overline{K_{3}})(5-5/p)n^{4-5/p}+o(n^{4-5/p}).$$
Thus, by Theorem \ref{cri} and equality (\ref{t11}), for $p>1$ and enough large $n$, we have
$$\lambda^{(p)}(C_{5},(\overline{K_{3}})_{n})\leq \lambda^{(p)}(C_{5},\mathrm{Col}(C_{5})_{n})=\lambda^{(p)}(\mathcal{R}_{n})\leq\lambda^{(p)}(T_{5}^{5}(n))=
\lambda^{(p)}(C_{5},\mathcal{L}_{n}),$$
where the third inequality follows from Lemma \ref{kpt}.

For $p=1$,
by Theorem \ref{t8}, we have
$\lambda^{(1)}(C_{5},(\overline{K_{3}})_{n})\leq\pi(C_{5},\overline{K_{3}})= 5!/5^{5}$. Moreover, observe that
 $$\lambda^{(1)}(C_{5},G)\geq\lambda^{(1)}(C_{5},C_{5})= \lambda^{(1)}(K^{5}_{5})=5!/5^{5},$$ which implies $\lambda^{(1)}(C_{5},G)=\lambda^{(1)}(C_{5},(\overline{K_{3}})_{n})$, completing the proof.
\end{proof}

\subsection{Spectral generalized Tur\'an problems for edge-critical graphs} For a graph $H$ and $e\in E(H)$, let $H-e$  denote the graph with vertex set $V(H)$ and edge set $E(H)\backslash\{e\}$.  A graph $H$ is called \emph{edge-critical} if  there exists an edge $e$ of $H$ such that $\chi(H-e)=\chi(H)-1$. The $s$-\emph{expansion} $H^{(s)}$  of $H$  is the $s$-graph obtained from $H$ by enlarging each edge of $H$ with $s-2$ new vertices disjoint from $V(H)$ such that distinct edges of $H$ are enlarged by distinct vertices.

Simonovits \cite{S1968} extended Tur\'an's theorem to any edge-critical graph $F$ and established the critical edge theorem. Later, Ma and Qiu \cite{MQ2020} generalized Simonovit's result as follows:

\begin{thm}[\hspace{1sp}\cite{MQ2020}]\label{mq}
Let  $l\geq s\geq 2$ , and let $F$ be an edge-critical graph with $\chi(F)=l+1$. Then for sufficiently large $n$,  the  unique $n$-vertex $F$-free graph with the maximum number of copies of $K_{s}$ is
the Tur\'an graph $T_{l}(n)$.
\end{thm}

Very recently, Zheng, Li and Su \cite{ZLS2025} determined the  maximum $p$-spectral radius  among all $n$-vertex $F^{(r)}$-free $s$-graphs, where  $F$ is an edge-critical graph.

\begin{lem}[\hspace{1sp}\cite{ZLS2025}]\label{ccg}
Let $p\geq1$, $l\geq s\geq 2$ , and let $F$ be an edge-critical graph with $\chi(F)=l+1$. Then there exists $n_{0}$, such that for any $F^{(s)}$-free $s$-graph $G$ on $n>n_{0}$ vertices, $\lambda^{(p)}(G)\leq\lambda^{(p)}(T^{s}_{l}(n))$. The equality holds  if and only if  $G=T^{s}_{l}(n)$ for $p>1$, and if  $ G\supseteq K^{s}_{l}$ for $p=1$.
\end{lem}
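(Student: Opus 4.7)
The plan is to adapt the spectral stability framework of Theorem \ref{cri} to the special case $Q=K^s_s$, for which the $(p,Q)$-spectral radius reduces to the ordinary $p$-spectral radius $\lambda^{(p)}(\cdot)$ and the $Q$-degree coincides with the ordinary degree $d(\cdot)$. Since $F$ is edge-critical with $\chi(F)=l+1$, the Turán graph $T^s_l(n)$ is $F^{(s)}$-free: any embedding of $F^{(s)}$ would require $l+1$ vertices of the base $F$ to occupy distinct parts of $T^s_l(n)$, but there are only $l$ parts. This gives the lower bound $\lambda^{(p)}(\overline{F^{(s)}}_n)\geq\lambda^{(p)}(T^s_l(n))$. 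By the Pikhurko–Mubayi theorem on expansions of edge-critical graphs, $\mathrm{ex}(n,F^{(s)})=e(T^s_l(n))+o(n^s)$, so Lemma \ref{t9} yields $\pi(K^s_s,\overline{F^{(s)}})=(l)_s/l^s$. Combined with Theorem \ref{t1}, we obtain $\lambda^{(p)}(\overline{F^{(s)}}_n)=((l)_s/l^s)\,n^{s-s/p}+o(n^{s-s/p})$.

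The next step is to verify the growth condition (\ref{d12d}). Evaluating $P_{K^s_s,T^s_l(n)}$ on the uniform vector $\mathbf{x}=(n^{-1/p},\ldots,n^{-1/p})$ and applying Lemma \ref{kpt}, one obtains the sharp asymptotic $\lambda^{(p)}(T^s_l(n))=((l)_s/l^s)\,n^{s-s/p}+O(n^{s-s/p-1})$, whence the difference $\lambda^{(p)}(T^s_l(n))-\lambda^{(p)}(T^s_l(n-1))$ equals $\pi(K^s_s,\overline{F^{(s)}})(s-s/p)n^{s-s/p-1}(1+o(1))$. Since $\lambda^{(p)}(\overline{F^{(s)}}_n)\geq\lambda^{(p)}(T^s_l(n))$, the required increment bound of Theorem \ref{cri} holds. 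Applying Theorem \ref{cri} forces any extremal $F^{(s)}$-free $s$-graph $G$ on $n$ vertices to satisfy $\delta(G)\geq(1-\varepsilon)\pi(K^s_s,\overline{F^{(s)}})\binom{n}{s-1}$ for a suitably small $\varepsilon>0$.

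The main obstacle is the ensuing structural step: showing that an $F^{(s)}$-free $s$-graph $G$ with such a large minimum degree must itself be $l$-partite, i.e.\ a sub-$s$-graph of a complete $l$-partite $s$-graph on $V(G)$. The natural route is a stability/supersaturation argument that exploits the edge-critical hypothesis. Concretely, if $G$ is not $l$-partite, then its \emph{link} structure contains many $K^s_{l+1}$-configurations; fixing an edge $e\in E(F)$ with $\chi(F-e)=l$, one partitions $V(F-e)$ into $l$ color classes, embeds these $l$ classes into $l$ densely connected vertex sets of $G$, and then uses the supersaturation of $K^s_{l+1}$-configurations (guaranteed by the large minimum degree) to place the remaining vertices of $F$ and the expansion sets, producing a copy of $F^{(s)}\subseteq G$, a contradiction.

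Once $G$ is known to be $l$-partite, Lemma \ref{kpt} immediately delivers $\lambda^{(p)}(G)\leq\lambda^{(p)}(T^s_l(n))$, with equality uniquely at $G=T^s_l(n)$ for $p>1$. For $p=1$, the equality case weakens because $\lambda^{(1)}$ is governed by the density alone: any $G\supseteq K^s_l$ already achieves $\lambda^{(1)}(G)\geq\lambda^{(1)}(K^s_l)=s!/s^s$, which matches the density $\pi(K^s_s,\overline{F^{(s)}})$ and hence the upper bound of Theorem \ref{t8}. Combining these ingredients completes the proposed proof.
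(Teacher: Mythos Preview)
First, note that the paper does not actually prove Lemma~\ref{ccg}: it is quoted verbatim from \cite{ZLS2025}. So there is no ``paper's own proof'' to compare against, and I am assessing your outline on its own merits.

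The principal gap is in your verification of condition~(\ref{d12d}). You compute the increment $\lambda^{(p)}(T^s_l(n))-\lambda^{(p)}(T^s_l(n-1))$ and then assert that, because $\lambda^{(p)}(\overline{F^{(s)}}_n)\geq\lambda^{(p)}(T^s_l(n))$, the required increment bound for the extremal sequence follows. It does not: from $\lambda^{(p)}(\overline{F^{(s)}}_n)\geq\lambda^{(p)}(T^s_l(n))$ you cannot bound $\lambda^{(p)}(\overline{F^{(s)}}_n)-\lambda^{(p)}(\overline{F^{(s)}}_{n-1})$ below, since the only relation you have for $n-1$ is $\lambda^{(p)}(\overline{F^{(s)}}_{n-1})\geq\lambda^{(p)}(T^s_l(n-1))$, which points the wrong way. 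Compare with how Theorem~4.4 verifies (\ref{d12d}): there the upper bound $\lambda^{(p)}(C_5,(\overline{K_3})_{n-1})\le\pi(C_5,\overline{K_3})(n-1)^{5-5/p}$ is obtained from Theorem~\ref{t8}, which in turn relies on $\overline{K_3}$ being $C_5$-flat via Lemma~\ref{flat}, and \emph{that} uses the fact that $K_3$ is $2$-covering. The expansion $F^{(s)}$ is in general \emph{not} $2$-covering (already for $s=2$ and $F=C_5$ the family $\overline{C_5}$ is not multiplicative: the blow-up $K_3(2,2,1)$ contains a $5$-cycle), so neither Lemma~\ref{flat} nor Theorem~\ref{t8} is available, and you have no upper bound of the form $\lambda^{(p)}(\overline{F^{(s)}}_{n-1})\le\pi\,(n-1)^{s-s/p}+o(n^{s-s/p-1})$. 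For the same reason your appeal to Theorem~\ref{t8} in the $p=1$ case is unjustified (and incidentally $\lambda^{(1)}(K^s_l)=(l)_s/l^s$, not $s!/s^s$).

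A second, more substantive issue is that the step you flag as ``the main obstacle'' --- that an $F^{(s)}$-free $s$-graph with minimum degree at least $(1-\varepsilon)\pi\binom{n}{s-1}$ must be $l$-partite --- is exactly the degree-stability theorem for expansions of edge-critical graphs, and is the real content of the result in \cite{ZLS2025}. Your sketch (``embed color classes into dense vertex sets, use supersaturation of $K^s_{l+1}$-configurations'') is far from a proof: for $s\ge 3$ one must control how the added expansion vertices are placed so that the resulting $s$-edges are genuinely distinct and present in $G$, and the supersaturation input you invoke is itself nontrivial. Without this structural lemma in hand, the framework of Theorem~\ref{cri} does not close, even if (\ref{d12d}) were established.
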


Let $H$ be an $r$-graph.  The  \emph{$2$-shadow} of $H$, denoted by $\partial_{2}H$, is the graph with vertex set $V(\partial_{2}H)=V(H)$ and edge set $E(\partial_{2}H)=\{\{v_{1},v_{2}\}: \{v_{1},v_{2}\}\subseteq e\in E(H)\}$. 

We present a spectral analogue of Theorem \ref{mq}.

\begin{thm}\label{fin}
Let $p\geq1$, $l\geq s\geq 2$ , and let $F$ be an edge-critical graph with $\chi(F)=l+1$. Then
there exists $n_{0}$, such that for any $F$-free graph $G$ on $n>n_{0}$ vertices, $\lambda^{(p)}_{s}(G)\leq\lambda^{(p)}_{s}(T_{l}(n))$. The equality holds  if and only if  $G=T_{l}(n)$ for $p>1$, and if  $ G\supseteq K_{l}$ for $p=1$.
\end{thm}
\begin{proof}
We define the following sets for a given integer $n$:
$$\mathcal{A}_{n}:=\{D(K_{s},G):G ~~\mbox{is}~~F\mbox{-free~~ graph~~on}~~n~~\mbox{vertices}\},$$
$$\mathcal{B}_{n}:=\{H:H~~\mbox{is~~an}~~s\mbox{-graph~~on}~~n~~\mbox{vertices~~and}~~\partial_{2}H~~\mbox{is}~~F\mbox{-free}\},$$
$$\mathcal{C}_{n}:=\{H:H~~\mbox{is~~an}~~s\mbox{-graph~~on}~~n~~\mbox{vertices~~and}~~H~~\mbox{is}~~F^{(s)}\mbox{-free}\}.$$
Observe that for any $F$-free graph $G$, we have $\partial_{2}D(K_{s},G)\subseteq G$, which implies $\mathcal{A}_{n}\subseteq \mathcal{B}_{n}\subseteq \mathcal{C}_{n}$. By Lemma \ref{ccg}, for $p\geq1$, it follows that
$$\lambda^{(p)}(\mathcal{A}_{n})\leq\lambda^{(p)}(\mathcal{C}_{n})=\lambda^{(p)}(T^{s}_{l}(n)).$$
Note that $D(K_{s},T_{l}(n))=T^{s}_{l}(n)$. From equality (\ref{t10}), we obtain
 $$\lambda^{(p)}(K_{s},\overline{F}_{n})=\lambda^{(p)}(\mathcal{A}_{n})=\lambda^{(p)}_{s}(T_{l}(n)).$$ The result follows from  Lemma \ref{ccg}.
\end{proof}
\begin{rmk}
From Theorem \ref{fin}, letting $p\to \infty$ and applying Proposition \ref{conti} directly yields Theorem \ref{mq}, but does not yield the uniqueness of the extremal graph. Moreover, Theorem \ref{fin} can be viewed as a generalization of  the result of Yu and Peng \cite[Theorem $10$]{YP2025}.
\end{rmk}

\section{Concluding remarks}
In this paper, we systematically investigate  the $(p,Q)$-spectral radius of hypergraphs and derive several results concerning spectral generalized  Tur\'an problems. Specifically, Theorem \ref{cri} establishes a spectral stability result. We conjecture that the conclusion of Theorem \ref{cri} holds even without condition (\ref{d12d}), leading to the following:
\begin{conj}\label{conj}
Let $p>1$, $s\geq r\geq2$, $Q$ be an $s$-vertex $r$-graph and $\mathcal{F}$ be a family of $r$-graphs with  $\pi(Q,\overline{\mathcal{F}})>0$.
Let $\mathcal{G}_n$
be the collection  of all $n$-vertex $\mathcal{F}$-free $r$-graphs with minimum $Q$-degree more than $(1-\varepsilon)\pi(Q,\overline{\mathcal{F}})\binom{n}{s-1}$ and $\lambda^{(p)}(Q,\mathcal{G}_{n})=\max\{\lambda^{(p)}(Q,G): G\in \mathcal{G}_{n}\}$.
Then  for any $\mathcal{F}$-free graph $H$ on $n\geq n_{0}$ vertices, we have
$$\lambda^{(p)}(Q,H)\leq \lambda^{(p)}(Q,\mathcal{G}_{n}).$$
In addition, if the equality holds, then $H\in\mathcal{G}_{n}$.
\end{conj}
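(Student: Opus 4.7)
The plan is to prove the conjecture by showing that any $n$-vertex $\mathcal{F}$-free $r$-graph $H_n$ maximizing $\lambda^{(p)}(Q,\cdot)$ automatically satisfies $\delta_Q(H_n)\ge (1-\varepsilon)\pi(Q,\overline{\mathcal{F}})\binom{n}{s-1}$, so that $H_n\in\mathcal{G}_n$; this upgrades Theorem \ref{cri} by removing the auxiliary growth hypothesis (\ref{d12d}).

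First, I would fix the asymptotic $\lambda^{(p)}(Q,\overline{\mathcal{F}}_n)=(\pi(Q,\overline{\mathcal{F}})+o(1))n^{s-s/p}$ from Theorem \ref{t1}. Letting $\mathbf{x}$ be a principal $Q$-eigenvector of $H_n$ and $\delta=\delta_Q(H_n)$, I plug these asymptotics into Lemma \ref{t2}, which reads
\[
\Bigl(\tfrac{\lambda^{(p)}(Q,H_n)(\mathbf{x}_{\textup{min}})^{p-1}}{(s-1)!}\Bigr)^{p}\le \tfrac{s!\binom{n}{s-1}\delta^{p-1}}{n^{s-1}} - \bigl(s!\tbinom{n}{s-1}\delta^{p-1}-\delta^{p}\bigr)(\mathbf{x}_{\textup{min}})^{p(s-1)}.
\]
A careful reduction shows that if $\delta<(1-\varepsilon)\pi(Q,\overline{\mathcal{F}})\binom{n}{s-1}$, then necessarily $(\mathbf{x}_{\textup{min}})^{p}\le(1-c)/n$ for some $c=c(\varepsilon,p,s,Q,\mathcal{F})>0$. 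Conversely, Lemma \ref{tt2} shows that the bound $(\mathbf{x}_{\textup{min}})^{p}\ge(1-\varepsilon')/n$ already forces the desired minimum-degree inequality. Hence the task reduces to showing that, for an extremal $H_n$, the eigenvector cannot be too unbalanced at its minimum vertex.

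To force this eigenvector lower bound without using (\ref{d12d}), I would apply a vertex-substitution argument. Let $v\in V(H_n)$ attain $x_v=\mathbf{x}_{\textup{min}}$ and let $w$ attain $x_w=\max_u x_u$. Deleting $v$ gives $H_n-v\in\overline{\mathcal{F}}_{n-1}$, and as in the derivation of (\ref{e8}) one has
\[
\lambda^{(p)}(Q,H_n-v)\;\ge\;\lambda^{(p)}(Q,H_n)\cdot\frac{1-s(\mathbf{x}_{\textup{min}})^{p}}{(1-(\mathbf{x}_{\textup{min}})^{p})^{s/p}}.
\]
I would then construct an $n$-vertex $\mathcal{F}$-free graph $H'$ by inserting a twin of $w$ into $H_n-v$; using the eigenequation (\ref{eigen}) to compare Rayleigh quotients, the gap between $x_w$ and $\mathbf{x}_{\textup{min}}$ (forced by the upper bound above) should produce $\lambda^{(p)}(Q,H')>\lambda^{(p)}(Q,H_n)$, contradicting extremality.

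The main obstacle is ensuring that the twin extension $H'$ is still $\mathcal{F}$-free for a \emph{general} family $\mathcal{F}$: a priori the new copy of $w$ could combine with existing vertices to create a forbidden configuration. A natural way around this is to first invoke a Simonovits-type stability theorem for $\pi(Q,\overline{\mathcal{F}})$ showing that the extremal $H_n$ is close to a canonical blow-up-type structure on which vertex duplication is safe; this route, however, typically requires additional hypotheses on $\mathcal{F}$ (supersaturation or edge-criticality). An alternative route is to strengthen Lemma \ref{t4} so that $(\mathbf{x}_{\textup{min}})^{p}\ge(1-\varepsilon')/n$ holds for \emph{all} sufficiently large $n$, rather than only for infinitely many, by pushing the monotonicity analysis of $\{\lambda^{(p)}(Q,\overline{\mathcal{F}}_n)n^{s/p}/(n)_s\}_n$ in Lemma \ref{t3} further; once this is in hand, Lemma \ref{tt2} closes the argument verbatim.
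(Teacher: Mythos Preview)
The statement you are attempting to prove is presented in the paper as a \emph{conjecture} (Conjecture~\ref{conj}), not a theorem; the paper offers no proof and instead poses two open problems (Problems~\ref{p1} and~\ref{p2}) as possible routes. Your two proposed approaches line up almost exactly with these: your ``strengthen Lemma~\ref{t4} to hold for all large $n$'' route is precisely Problem~\ref{p1}, and recovering a growth rate that feeds back into condition~(\ref{d12d}) is the content of Problem~\ref{p2}.

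Neither route in your proposal is complete, and you essentially acknowledge this yourself. For the twin-vertex argument, you correctly identify the obstruction: adjoining a duplicate of $w$ to $H_n-v$ need not produce an $\mathcal{F}$-free $r$-graph for a general family $\mathcal{F}$ (this fails whenever $\overline{\mathcal{F}}$ is not multiplicative, which is the generic situation), and invoking Simonovits-type stability to repair this requires exactly the sort of extra hypotheses on $\mathcal{F}$ that the conjecture is meant to avoid. For the second route, ``pushing the monotonicity analysis of $\{\lambda^{(p)}(Q,\overline{\mathcal{F}}_n)n^{s/p}/(n)_s\}$ further'' is not an argument: Lemma~\ref{t3} yields small successive differences only on an infinite subsequence, because its proof hinges on the divergence of $\sum 1/(n\log n)$, and there is no visible mechanism to upgrade this to all large $n$ without an input equivalent to the growth condition~(\ref{d12d})---which is precisely the hypothesis the conjecture asks you to remove. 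Your proposal thus outlines the same circle of ideas the paper explicitly leaves open rather than closing any of them.
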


To address this conjecture, we propose two potential approaches.

\begin{pro}\label{p1}
Let $Q$ be an $r$-graph on $s$ vertices, and $\mathcal{P}$ be a hereditary  property of $r$-graphs with $\pi(Q,\mathcal{P})>0$. Suppose that $H_{n}\in \mathcal{P}_{n}$ is an $r$-graph satisfying $\lambda^{(p)}(Q,H_{n})=\lambda^{(p)}(Q,\mathcal{P}_{n})$ for $p>1$ and $\mathbf{x}=(x_1,\ldots,x_n)$ is a principal $Q$-eigenvector for $\lambda^{(p)}(Q,H_{n})$. Does there exist a constant  $n_{0}$ such that
for all $n\geq n_{0}$,
$$(\mathbf{x}_{\textup{min}})^{p}\geq\frac{1}{n}\Big(1-\frac{p}{(p-1)s\log n}\Big)?$$
An affirmative answer to Problem \ref{p1} would, via Lemma \ref{tt2},  imply conjecture \ref{conj}.
\end{pro}

\begin{pro}\label{p2}
Let $Q$ be an $s$-vertex $r$-graph, and let $\mathcal{F}$ be a family of $r$-graphs with $\pi(Q,\overline{\mathcal{F}})>0$.  For $p>1$, does there exist a sequence $\{a_{n}\}$  such that
$$\lambda^{(p)}(Q,\overline{\mathcal{F}}_{n})=\pi(Q,\overline{\mathcal{F}})n^{s-s/p}+a_{n}n^{(s-1)(1-1/p)},$$
and the limit $\lim\limits_{n\to \infty} a_{n}$ exists? If answered affirmatively, then
$$\lambda^{(p)}(Q,\overline{\mathcal{F}}_{n})=\lambda^{(p)}(Q,\overline{\mathcal{F}}_{n-1}) + \pi(Q,\overline{\mathcal{F}})(s-s/p) n^{s-s/p-1}+o(n^{s-s/p-1}),$$
and conjecture \ref{conj} would follow from Theorem \ref{cri}.
\end{pro}

 Moreover,  Problem \ref{p2} is of independent interest. Generally speaking, to what structural parameters of the graph is $\{a_{n}\}$ related and for which hereditary families does the limit of $\{a_{n}\}$ exist?

For any  $r$-graph $Q$ on $s$ vertices and any $r$-graph $H$, recall  the definition of  $s$-graph $D(Q,H)$: its edge set is defined as
$$E(D(Q,H))=\{\{v_{1},\ldots,v_{s}\}:H[v_{1},\ldots,v_{s}]\supseteq Q\}.$$
Assigning a weight $\mathcal{N}(Q,H[v_{1},\ldots,v_{s}])$ to each
edge $\{v_{1},\ldots,v_{s}\}$, we define its $p$-spectral radius as:
$$\lambda^{(p)}(D(Q,H))=\max_{\|\mathbf{x}\|_{p}=1}s!\sum_{\{i_{1},\ldots,i_{s}\}\in E(D(Q,H))}\mathcal{N}(Q,H[\{i_{1},\ldots,i_{s}\}])x_{i_{1}}\cdots x_{i_{s}}.$$
Then, it follows that
\begin{equation*}
\lambda^{(p)}(Q,H)=\lambda^{(p)}(D(Q,H)).
\end{equation*}
This establishes a connection between the $(p,Q)$-spectral radius of the $r$-graph $H$ and the $p$-spectral radius of the weighted $s$-graph $D(Q,H)$. For relevant conclusions regarding the $p$-spectral radius of weighted hypergraphs, one may refer to the results of Nikiforov \cite{N2014B}, such as the Perron-Frobenius theory for the weighted hypergraphs discussed in Section $5$ of  \cite{N2014B}.

\section*{Data availability}
 No data was used for the research described in the article.


\begin{thebibliography}{99}
\bibitem{AS2016}
N. Alon, C. Shikhelman,
Many $T$ copies in $H$-free graphs,
\emph{J. Combin. Theory Ser. B}, 121(2016), 146-172.

\bibitem{B1978} B. Bollob\'as, Extremal Graph Theory, \emph{Academic Press Inc.}, New York, 1978.

\bibitem{CL2024}
W. Chen, X. Liu, Strong stability from vertex-extendability and applications in generalized Tur\'an problems, \emph{arXiv}: 2406.05748v1.

\bibitem{G2012}
A. Grzesik, On the maximum number of five-cycles in a triangle-free graph, \emph{J. Combin. Theory Ser. B}, 102(5)(2012), 1061-1066.

\bibitem{GP2019}
D. Gerbner, C. Palmer, Counting copies of a fixed subgraph in $F$-free graph,  \emph{European J. Combin.}, 82(2019), 103001.

\bibitem{GP2025}
D. Gerbner, C. Palmer,
Survey of generalized Tur\'an problems-counting subgraphs, \emph{Electron. J. Combin.}, (2026), Dynamic Surveys, DS27.

\bibitem{HHK2013} H.~Hatami, J.~Hladk{\'y}, D.~Kr{\'a}l', S.~Norine, A.~Razborov,
\newblock On the number of pentagons in triangle-free graphs,
\newblock \emph{J. Combin. Theory Ser. A}, 120(3)(2013), 
722-732.

\bibitem{KLM2014} P. Keevash, J. Lenz, D. Mubayi, Spectral extremal problems for hypergraphs, \emph{SIAM J. Discrete Math.}, 28(4)(2014), 1838-1854.

\bibitem{KNS1964} G. Katona, T. Nemetz, M. Simonovits, On a problem of Tur\'an in the theory of graphs, \emph{ Mat. Lapok}, 15(1964), 228-328.

\bibitem{KNY2014} L. Kang, V. Nikiforov, X. Yuan, The $p$-spectral radius of $k$-partite and $k$-chromatic uniform hypergraphs, \emph{Linear Algebra Appl.}, 478(2015), 81-107.

\bibitem{LB2023} C. Liu, C. Bu, On a generalization of the spectral Mantel's theorem, \emph{J. Comb. Optim.}, 46(2)(2023), 14.

\bibitem{Liu2025} X. Liu, Spectral generalized Tur\'an problems, \emph{arXiv}: 2507.21689v1.

\bibitem{LP2018} B. Lidick\'y, F. Pfender, Pentagons in triangle-free graphs, \emph{European J. Combin.}, 74(2018), 85-89.

\bibitem{MQ2020} J. Ma, Y. Qiu, Some sharp results on the generalized Tur\'an number, \emph{European J. Combin.}, 84(2020), 103026.

\bibitem{N2002} V. Nikiforov, Some inequalities for the largest eigenvalue of a graph, \emph{Combin. Probab. Comput.}, 11(2002), 179-189.

\bibitem{N2009} V. Nikiforov, A spectral Erd\H{o}s-Stone-Bollob\'as theorem,
\emph{Combin. Probab. Comput.}, 18(2009), 455-458.

\bibitem{N2014A} V. Nikiforov, An analytic theory of extremal  hypergraph problems, \emph{arXiv}: 1305. 1073v2.

\bibitem{N2014B} V. Nikiforov, Analytic methods for  uniform hypergraphs, \emph{Linear Algebra Appl.}, 457(2014), 455-535.

\bibitem{S1968} M. Simonovits, A method for solving extremal problems in graph theory, stability problems, In \emph{Theory of Graphs (Proc. Colloq., Tihany, 1966)}, pp.279-319, Academic Press, New York, 1968.

\bibitem{W1986} H. Wilf, Spectral bounds for the clique and independence numbers of graphs,  \emph{J. Combin. Theory Ser. B},
40(1986), 113-117.

\bibitem{YP2025} L. Yu, Y. Peng, A spectral version of the theorem of Zykov and Erd\H{o}s, \emph{Electron. J. Combin.}, 32(4)(2025), P4.15.

\bibitem{ZLF2024} J. Zheng, H. Li, Y.-Z. Fan, Spectral Tur\'an problems for nondegenerate hypergraphs, \emph{Electron. J. Combin.},  33(1)(2026), P1.42.

\bibitem{ZLS2025} J. Zheng, H. Li, L. Su, Spectral Tur\'an-type  problems  for the $\alpha$-spectral radius of hypergraphs with degree stability, \emph{arXiv}: 2509. 24354v1.
\end{thebibliography}
\end{document}